\newcommand{\e}{\mathrm{e}}
\newcommand{\ri}{\mathrm{i}}
\renewcommand{\d}{\mathrm{d}}
\newcommand{\C}{\mathbb{C}}
\newcommand{\R}{\mathbb{R}}
\DeclareMathOperator{\rank}{rank}
\renewcommand{\Re}{\operatorname{Re}}
\renewcommand{\Im}{\operatorname{Im}}
\newcommand{\funF}{\mathfrak{F}}
\newcommand{\funE}{\mathfrak{E}}
\newcommand{\T}{\mathsf{T}}
\newcommand{\cM}{\mathcal{M}}
\renewcommand{\tilde}{\widetilde}
\renewcommand{\hat}{\widehat}
\newtheorem{theorem}{Theorem}[section]
\newtheorem{definition}{Definition}[section]
\newtheorem{lemma}[theorem]{Lemma}
\newtheorem{remarks}{Remark}[section]
\begin{document}

\articletype{Research Paper} 

\title{Generalized rational Prony and Bernoulli methods}

\author{Tam\'as D\'ozsa$^{1,2}$\orcid{0000-0003-0919-4385}, Matthias Voigt$^1$\orcid{0000-0001-8491-1861}, Zolt\'an Szab\'o$^{2}$\orcid{0000-0001-6183-7603}, J\'ozsef Bokor$^{2}$\orcid{0000-0002-3461-4807} and P\'eter Kov\'acs$^{3}$\orcid{0000-0002-0772-9721}}

\affil{$^1$ UniDistance Suisse, Faculty of Mathematics and Computer Science, Brig, Switzerland}

\affil{$^2$ HUN-REN Institute for Computer Science and Control, Systems and Control Lab, Budapest, Hungary}

\affil{$^3$ E\"otv\"os Lor\'and University, Department of Numerical Analysis, Budapest, Hungary}

\email{tamas.dozsa@unidistance.ch}

\keywords{Prony's method, Bernoulli's method, rational functions, Takenaka-Malmquist functions}

\begin{abstract}
The generalized operator-based Prony method is an important tool for describing signals which can be written as finite linear combinations of eigenfunctions of certain linear operators. On the other hand, Bernoulli's algorithm and its generalizations can be used to recover the parameters of rational functions belonging to finite-dimensional subspaces of $H_2$ Hardy-Hilbert spaces. In this work, we discuss several results related to these methods. We discuss a rational variant of the generalized operator-based Prony method and show that in fact, any Prony problem can be treated this way. This realization establishes the connection between Prony and Bernoulli methods and allows us to address some well-known numerical pitfalls. Several numerical experiments are provided to showcase the usefulness of the introduced methods. These include problems related to the identification of time-delayed linear systems and parameter recovery problems in reproducing kernel Hilbert spaces. 
\end{abstract}

\section{Introduction}
\label{sec:intro}

Consider a vector space $V$ over the field of complex numbers $\mathbb{C}$. Let $f \in V$ obey
\begin{equation}
    \label{eq:PronySig}
    f := \sum_{k=1}^M c_k v_{\lambda_k},
\end{equation}
where $\lambda_k, c_k \in \mathbb{C}$ and  $v_{\lambda_k} \in V$ for $k=1,\ldots,M$ and $M \in \mathbb{N} \setminus \{0\}$. Assume that the vectors $v_{\lambda_k}$ for $k=1,\ldots,M$ are linearly independent and are completely defined by $\lambda_k$ in the sense that there exists a $\mathbb{C} \to V$  bijection which maps $\lambda_k$ to $v_{\lambda_k}$. It follows that $\lambda_k \neq \lambda_j \ (k \neq j)$. Without loss of generality, assume that $c_k \neq 0$ for all $k=1,\ldots,M$. We shall refer to $c_k$ as linear and to $\lambda_k$ as the nonlinear parameters, as in the latter case the linear dependence of $v_{\lambda_k}$ on $\lambda_k$ is not assumed. In this work, our objective is as follows. Given $f$, we would like to recover the linear parameters $c_k$ and the nonlinear parameters $\lambda_k$. Following~\cite{peter2013generalized, peter2013generalizedDissertation, stampfer2019generalized, stampfer2020generalized}, we shall refer to $f$ as a \emph{Prony signal} and to  $v_{\lambda_k}$ as \emph{Prony atoms} henceforth.

The parameter recovery of Prony signals has many practical applications. For example, finding the poles and residues which define the (rational) transfer function of a single-input single-output linear time-invariant (SISO LTI) dynamical system can be posed as such a problem~\cite{dozsa2025system}. Another example application is the sparse reconstruction of signals using orthogonal polynomial bases~\cite{stampfer2020generalized}. Variants of Prony's method have also been successfully applied to various medical imaging problems~\cite{jaramillo2014improving, jani2023prony}.

This study focuses on generalized Prony and Bernoulli methods as detailed in~\cite{siamdozsa} and~\cite{SS1}. A ``general" description of the strategy used by Prony-like algorithms can be given as follows. The recovery of $c_k$ and $\lambda_k$ are performed in two separate steps, where in the first step, the nonlinear parameters are identified. In the case of Prony's method, this usually involves finding the solution to a linear system of equations characterized by a Hankel matrix (see section~\ref{sec:genPro} and~\cite{peter2013generalizedDissertation, stampfer2019generalized}). It should be noted however, that many other strategies exist, with various benefits and drawbacks. Of these we shall focus on Bernoulli's method in this study, however we note that matrix pencil methods~\cite{ikramov1993matrix}, the ESPRIT algorithm~\cite{roy1986estimation}, the quotient-difference (QD) algorithm~\cite{tsypin2020application}, and several alternatives are known. For a thorough overview of Prony algorithms and related methods, the authors recommend~\cite{peter2013generalized} and~\cite{stampfer2019generalized}. In the second step of a Prony-like scheme, the linear parameters $c_k$ are recovered. This usually involves solving a Vandermonde system. Although numerically stable solutions to this have been developed~\cite{higham1988fast}, in this study we propose a simple alternative based on rational orthogonal expansions.

The classical Prony method, originally developed by Gaspard de Prony in 1795~\cite{prony1795} can be viewed as a special case of the problem class posed in Eq.~\eqref{eq:PronySig}. Let $\ell$ denote the vector space of all complex sequences $\mathbb{N} \cup \{0\} \to \mathbb{C}$. In particular, choosing $V := \ell$ and $v_{\lambda_k} := \left( \e^{\lambda_k}, \e^{2 \lambda_k}, \e^{3 \lambda_k}, \ldots \right) \in \ell$ for $\lambda_k \in \mathbb{R} + \ri[0, 2\pi)$ for $k=1,\ldots,M$ yields the classical Prony problem.

In this study, we rely heavily on the the generalized Prony method proposed in~\cite{peter2013generalized} by Peter and Plonka. In~\cite{peter2013generalized}, the observation is made, that if the Prony atoms $v_{\lambda_k}$ coincide with the eigenfunctions of an appropriate linear operator, then an algorithm similar to Prony's original method can be utilized to recover the desired parameters. In section~\ref{sec:genPro} we review this construction along with the generalized operator-based Prony (GOP) approach introduced by Stampfer and Plonka in~\cite{stampfer2020generalized}. 
Finally, a prior result which is heavily referenced in this study is~\cite{dozsa2025system}, where the framework proposed in~\cite{peter2013generalized} is applied to Prony signals belonging to finite-dimensional subspaces of the Hardy-Hilbert space $H_2(\mathbb{D})$ of complex functions which are analytic in the open unit disk. As we will discuss in more detail later, these signals correspond to certain transfer functions of single-input single-output linear time-invariant (SISO LTI) systems and are therefore relevant in applications. 
In~\cite{dozsa2025system} the authors show that this problem can be addressed using the GOP framework. We further elaborate on the specifics of the above cited schemes and propose some mild generalization of Prony-like problems in section~\ref{sec:genPro}.

Bernoulli's original method to recover the poles of rational functions is treated, e.g.,  in~\cite{HEN93}. In particular, using the notation introduced for Prony problems, let $V=H_2(\mathbb{D})$, and consider the the Prony signal
\begin{equation}
    f(z) = \sum_{k=1}^M c_k  \frac{1}{1 - \overline{\lambda_k}z} \quad (c_k \in \mathbb{C}, \ \lambda_k \in \mathbb{D}, \ z \in \overline{\mathbb{D}}),
\end{equation}
where $\overline{\mathbb{D}}$ denotes closed complex unit disk. Note that because $f$ is rational, it can be analytically continued from the domain $\mathbb{D}$ to $\overline{\mathbb{D}}$. Denoting the set of nonlinear parameters by $\Lambda = \{\lambda_1, \lambda_2, \ldots, \lambda_M \} \subset \mathbb{D}$, suppose that 
\begin{equation}
    \label{eq:dpole}
    |\lambda_{1}|>|\lambda_k| \quad (k=2, \ldots, M).
\end{equation}
If $\lambda_1$ satisfies Eq.~\eqref{eq:dpole}, it is referred to as a dominant nonlinear parameter and Bernoulli's method can be used to recover it. Bernoulli's method can be applied as long as a unique dominant parameter $\lambda_1$ among the nonlinear parameters $\lambda_k \ (k=1,\ldots,M)$ exists. Indeed, since $f \in H_2(\mathbb{D})$ (see, e.g., \cite[Eq.~(2.10)]{siamdozsa}, or~\cite[Prop.~12.3.4 (b)]{bprodbook}), the Fourier coefficients of $f$ exist and can be written as
\begin{equation}
    \label{eq:fcoeff}
    f_n := \langle f, z^n \rangle_{H_2(\mathbb{D})} = \frac{1}{2 \pi \ri} \int_{\mathbb{T}} f(z) \overline{z^n}\, \d z = \sum_{k=1}^M c_k \overline{\lambda_k}^n \quad (n \in \mathbb{N} \cup \{0\}),
\end{equation}
where $\mathbb{T}$ denotes the unit circle.
Suppose the parameter $\lambda_1$ is dominant as defined in Eq.~\eqref{eq:dpole} and consider the sequence ${(q_n)}_{n \ge 0}$ with $q_n := \frac{f_{n+1}}{f_n} \ (n \in \mathbb{N} \cup \{0\})$. We have
\begin{equation}
    \label{eq:BernOrig}
    q_n := \overline{\lambda_1} + \mathcal{O}(\beta^{n}),
\end{equation}
where $\beta := \max_{k=2,\ldots,M} |\lambda_k| / |\lambda_1|$. A limitation of this scheme is that the sequence $q_n$ fails to converge if there is no dominant nonlinear parameter and convergence can be slow if there is no clear gap between the dominant and the other nonlinear parameters. When considering real-life applications, for example recovering the poles of a transfer function describing a linear system, unfortunately this is often the case~\cite{szidar, vandenhof}. To remedy this issue, Soumelidis and Schipp introduce a generalization of the method relying on Laguerre-Fourier coefficients~\cite{SS1}, which is guaranteed to converge even in the absence of a unique dominant nonlinear parameter. This result is  further generalized in~\cite{siamdozsa}, where the rational orthogonal Takenaka-Malmquist system~\cite{takenakaorig, malmquistorig}, and later, non-orthogonal function systems from $H_2(\mathbb{D})$ sub-algebras~\cite{med24} are utilized to construct the sequence ${(q_n)}_{n \ge 0}$ (for details see also section~\ref{sec:genBer}). One useful property of the generalized Bernoulli (GB) scheme is that the nonlinear parameters $\lambda_k$ can be recovered in an iterative fashion by repeatedly considering Eq.~\eqref{eq:BernOrig}, see, e.g., ~\cite{SBS, SSSAB} and section~\ref{sec:genBer}. Thus, the method is usable when the number of nonlinear parameters $M$ is unknown a priori. Furthermore, it can be used to construct a reduced-order model of $f$, which is guaranteed (at least asymptotically) to posses the same dominant nonlinear parameters as $f$. 

In this study the following novel contributions are described in detail:
\begin{enumerate}
    \item We show that parameter estimation problems that can be solved by the generalized operator-based Prony's method detailed in~\cite{stampfer2020generalized} can always be posed as pole finding tasks. This result is summarized by Theorem~\ref{thm:ratpron}.
    \item We provide an intuitive algorithm to recover the parameters of Prony signals (see Eq.~\eqref{eq:PronySig}) using a rational variant of the generalized operator-based Prony algorithm~\cite{dozsa2025system}. In addition to showing that any Prony problem can be solved this way, we also provide experiments that indicate that numerical issues associated with solving a Vandermonde system to recover linear parameters of Prony signals can be bypassed through the use of appropriate rational orthogonal expansions. The novelties are given in Theorem~\ref{thm:lin} and subsection~\ref{sec:exp1}.
    \item For the first time, we show how GB schemes can be used to solve \textit{any} Prony problem. This is especially useful, when the number of basis functions ($M$ in Eq.~\eqref{eq:PronySig}) is unknown a priori, or when it is known to be large and a reduced-order model of $f$ is required. Bernoulli schemes in this case provide asymptotic guarantees that the identified nonlinear parameters match those of the original signal $f$. This result is stated in Theorem~\ref{thm:pronbern}.
    \item In section~\ref{sec:ex}, we provide several examples and numerical experiments which show the benefit of our proposed algorithms. 
\end{enumerate}

The remainder of this paper is structured as follows. In section~\ref{sec:genPro} we review the GOP method introduced in~\cite{peter2013generalized} and~\cite{stampfer2020generalized}. In addition, we discuss a rational variant of the GOP scheme~\cite{dozsa2025system}, that we call generalized rational operator-based Prony (GROP) method, and that can be used to identify SISO LTI systems in the frequency domain. In section~\ref{sec:genBer} we review generalized Bernoulli methods and their connection to rational Prony problems. In section~\ref{sec:genRat} we state our main findings and results. More precisely, we show that any general Prony problem can be written as a  rational one. In addition, we argue that certain aspects of $H_2$ spaces can be exploited to improve the numerical stability of the GROP algorithm. Finally, we introduce Bernoulli schemes which can be used to solve any Prony problem. Section~\ref{sec:ex} contains our examples and numerical experiments. In section~\ref{sec:conc} we draw our conclusions and describe future research directions.

\section{The generalized operator-based Prony method}
\label{sec:genPro}
We shall begin with a short review of the generalized operator-based Prony (GOP) method as introduced in~\cite{stampfer2020generalized}. Whenever possible we will follow the notations introduced in~\cite{peter2013generalized} and~\cite{stampfer2020generalized}. For an excellent and deeper discussion of the GOP method, we recommend~\cite{stampfer2019generalized}.

\subsection{Preliminaries}
\label{sec:prelims}
 First, we recite some important definitions and theorems. Consider the Prony problem defined in Eq.~\eqref{eq:PronySig} with all of the assumptions mentioned there. Without loss of generality, one may assume that there exists a linear operator $A : V \to V$, such that the nonlinear parameters $\lambda_k$ are included among the eigenvalues of $A$. Denote the \emph{active} part of the (point) spectrum of $A$ by $\Lambda := \{\lambda_1,\ldots,\lambda_M\}$ with $\lambda_k \neq \lambda_j$ for all $j \neq k$ and consider the set
\begin{equation}
     \label{eq:ma}
     \cM := \mathcal{M}(A,\Lambda) := \operatorname{span} \{v_{\lambda_k}:\ k=1,\ldots,M \} = \left\{ f = \sum_{k=1}^M c_k v_{\lambda_k}: \ c_k \in \mathbb{C}\right\}.
\end{equation}
This set is called \emph{signal space} which is indeed a vector space for any fixed $\Lambda$. However, we will typically assume that $c_k \neq 0$ $(k = 1,\ldots,M)$, because then the signal complexity coincides the number of Prony atoms.
In practical cases, not only is $A$ assumed to have a point spectrum, but we also assume that $f$ is sparse in the sense that at most $M$ pairwise distinct eigenvalues belonging to $\mathbb{C}$ will be considered.  For simplicity, we shall denote both the full and this restricted point spectrum of $A$ with $\Lambda(A)$ or simply $\Lambda$ henceforth and specify the difference whenever necessary. In fact, we shall consider
\begin{equation}
    \label{eq:lambset}
    \Lambda = \Lambda(A) := \{ \lambda_1, \lambda_2, \ldots, \lambda_M \} \subset \mathbb{C} \quad (M \in \mathbb{N})
\end{equation}
and use the notation $\lambda_k \ (k=1,\ldots,M)$ to refer to the nonlinear parameters of Prony signals. Following~\cite{stampfer2019generalized}, we shall say that the operator $A$ \textit{generates} the signal space $\cM$, or $A$ is a \emph{generator} of $\cM$. First we define of iteration operators as introduced in~\cite{stampfer2019generalized, stampfer2020generalized}:

\begin{definition}[Iteration operator]
    \label{def:iterop}
    Let $V$ be a vector space and $A : V \to V$ be a generator of the signal space $\cM$. Furthermore, let $\varphi: \Lambda(A) \to \mathbb{C}$ be injective. When a bounded linear operator
    $$
        \Psi_{\varphi} : \cM \to \cM
    $$
    satisfies 
    $$
        \Psi_{\varphi}v_{\lambda_k} = \varphi(\lambda_k) v_{\lambda_k} \quad (k=1,\ldots,M),
    $$
    it shall be referred to as an \emph{iteration operator}. 
\end{definition}
We note again, that in this paper, $\Lambda(A)$ denotes the restricted (sparse) spectrum of $A$, for a more general definition, we refer to~\cite[Def.~2.2.4]{stampfer2019generalized}. In order to recover the parameters of Prony signals using the GOP method, so-called evaluation schemes need to be defined~\cite{stampfer2019generalized, stampfer2020generalized}.

\begin{definition}[(Admissible) evaluation scheme] Let $V$ be a vector space and $A : V \to V$ be a generator of the signal space $\cM$.
A sequence $\mathcal{F} := {(\mathcal{F}_m)}_{m \ge 0}$ of linear functionals
$$
    \mathcal{F}_m : \cM \to \mathbb{C} \quad (m \in \mathbb{N} \cup \{0\})
$$
is called an \emph{evaluation scheme}. Furthermore,  define $E_{\mathcal{F}} := \left[ \mathcal{F}_m(v_{\lambda_k}) \right]_{m=0, k = 1}^{N,M} \in \C^{(N+1) \times M}$. The evaluation scheme is called \emph{admissible}, if for any $N \geq M - 1$ we have $\rank(E_\mathcal{F}) = M$.
\end{definition}

An important special case is given by $\mathcal{F}_m := \funF \circ \Psi_m$, where $\Psi_m : \cM \to \cM$ are appropriate linear operators and $\funF : \cM \to \mathbb{C}$ is a linear functional which satisfies $\funF(v_{\lambda_k}) \neq 0 \ (k=1,\ldots,M)$. Often, such an evaluation scheme is generated by powers of a single iteration operator $\Psi_\varphi$, i.e., $\mathcal{F}_m := \funF \circ \Psi_\varphi^m$. In this case, the evaluation scheme is referred to as a \emph{canonical evaluation scheme}. In~\cite{stampfer2019generalized}, Stampfer shows that a canonical evaluation scheme is always admissible. 
Using evaluation schemes, one can define so-called sampling schemes~\cite{stampfer2019generalized, stampfer2020generalized}.

\begin{definition}[(Realizable/admissible) sampling scheme]
\label{def:evalscheme}
    Consider an evaluation scheme ${(\mathcal{F}_m)}_{m \ge 0}$ and an iteration operator $\Psi_{\varphi} : \cM \to \cM$ acting on the signal space $\cM$. The family of maps ${(S_{m, j})}_{m,j \ge 0}$ defined by
    $$
        S_{m, j} := \mathcal{F}_{m} \circ \Psi_{\varphi}^{j} : \cM \to \mathbb{C} \quad (m, j \in \mathbb{N} \cup \{0\})
    $$
    is called a \emph{sampling scheme} for $\cM$. A sampling scheme is called \emph{admissible}, if ${(\mathcal{F}_m)}_{m \ge 0}$ is admissible. A sampling scheme is called \emph{realizable}, if it can be written as a family of linear functionals that, when applied to $f$ (the Prony signal from Eq.~\eqref{eq:PronySig}), can be evaluated without any  derivatives of $f$. 
\end{definition}
Fixing upper indices $N,M$ in the sampling scheme allows us to acquire the so-called sampling matrix $\mathcal{X}_{N,M} := \left[ S_{m, j}(f) \right]_{m, j = 0}^{N, M} \in \C^{(N+1) \times (M+1)}$. 

The key idea behind the GOP method is the following theorem proven by Stampfer and Plonka in~\cite{stampfer2020generalized}.

\begin{theorem}[Generalized operator-based Prony (GOP) method]
\label{thm:GOP}
    Let ${(S_{m, j})}_{m,j \ge 0} := {(\mathcal{F}_m \circ \Psi_{\varphi}^{j})}_{m,j \ge 0}$ be an admissible sampling scheme over the $M$-dimensional signal space $\cM$ as in \eqref{eq:ma}. Then every $f \in \cM$ is uniquely determined by the sampling matrix
    \begin{equation}
        \label{eq:sampmat}
        \mathcal{X}_{N,M-1} := \left[ \mathcal{F}_m(\Psi_{\varphi}^{j} f)\right]_{m,j=0}^{N,M-1},
    \end{equation}
    where $N \geq M-1$.
\end{theorem}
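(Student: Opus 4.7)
The plan is to factor the sampling matrix $\mathcal{X}_{N,M-1}$ as a product of three matrices whose rank properties are controlled by the hypotheses, and then read off uniqueness.

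First, I would expand any $f \in \cM$ as $f = \sum_{k=1}^M c_k v_{\lambda_k}$ per Eq.~\eqref{eq:PronySig}. Iterating the eigenrelation from Definition~\ref{def:iterop} gives $\Psi_\varphi^j v_{\lambda_k} = \varphi(\lambda_k)^j v_{\lambda_k}$, so linearity of $\mathcal{F}_m$ yields
\begin{equation*}
    \mathcal{F}_m(\Psi_\varphi^j f) = \sum_{k=1}^M c_k\, \varphi(\lambda_k)^j\, \mathcal{F}_m(v_{\lambda_k}) \quad (m=0,\ldots,N;\ j=0,\ldots,M-1).
\end{equation*}
Reading this as a matrix product, I would write $\mathcal{X}_{N,M-1} = E_\mathcal{F}\, D\, V$, where $E_\mathcal{F} = [\mathcal{F}_m(v_{\lambda_k})]_{m,k} \in \C^{(N+1)\times M}$ is the evaluation matrix from Definition~2.2, $D = \operatorname{diag}(c_1,\ldots,c_M)$, and $V = [\varphi(\lambda_k)^j]_{k=1,\,j=0}^{M,\,M-1} \in \C^{M \times M}$ is a square Vandermonde matrix in the nodes $\varphi(\lambda_k)$.

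Next, I would establish the rank properties of the two outer factors. The nodes $\varphi(\lambda_k)$ are pairwise distinct because the $\lambda_k$ are distinct (Eq.~\eqref{eq:lambset}) and $\varphi$ is injective by assumption, so $V$ is nonsingular. The admissibility of the evaluation scheme together with $N \ge M-1$ gives $\rank(E_\mathcal{F}) = M$, i.e.\ $E_\mathcal{F}$ has full column rank.

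For the uniqueness conclusion, I would suppose that $f, \tilde f \in \cM$ produce the same sampling matrix, writing $\tilde f = \sum_{k=1}^M \tilde c_k v_{\lambda_k}$ and $\tilde D = \operatorname{diag}(\tilde c_1, \ldots, \tilde c_M)$. The factorization gives $E_\mathcal{F}\,(D-\tilde D)\,V = 0$; right-multiplying by $V^{-1}$ and invoking the full column rank of $E_\mathcal{F}$ forces $D = \tilde D$, hence $c_k = \tilde c_k$ for all $k$ and therefore $f = \tilde f$. The main conceptual step is recognizing the factorization $\mathcal{X}_{N,M-1} = E_\mathcal{F} D V$, which cleanly separates the three independent pieces of information in the sampling matrix; once it is in place, the rank arguments (Vandermonde nonsingularity on one side, admissibility on the other) do the rest, so I do not anticipate a genuine obstacle.
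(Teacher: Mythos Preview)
Your argument is correct: the factorization $\mathcal{X}_{N,M-1} = E_{\mathcal{F}}\,D\,V$ with $V$ a nonsingular Vandermonde matrix (distinct nodes $\varphi(\lambda_k)$ by injectivity of $\varphi$) and $E_{\mathcal{F}}$ of full column rank (admissibility plus $N \ge M-1$) immediately gives injectivity of $f \mapsto \mathcal{X}_{N,M-1}$ on $\cM$, which is exactly the uniqueness asserted. Note that the paper does not supply its own proof of Theorem~\ref{thm:GOP} but defers to \cite{stampfer2019generalized,stampfer2020generalized}; the factorization you use is precisely the standard argument found there, so there is nothing to contrast.
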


The proof of Theorem~\ref{thm:GOP} can be found in~\cite{stampfer2019generalized} and~\cite{stampfer2020generalized}. We now proceed to demonstrate the GOP algorithm. Consider the so-called Prony polynomial
\begin{equation}
    \label{eq:pronpol}
    P_{\Lambda(A)}^{\varphi}(z) := \prod_{k=1}^M (z - \varphi(\lambda_k)) = \sum_{k=0}^M p_k z^{k} \quad (z \in \mathbb{C}).
\end{equation}
With the conditions of Theorem~\ref{thm:GOP}, the so-called annihilation equation holds, i.e., 
\begin{equation}
    \label{eq:anniheq}
    \sum_{k=0}^{M} p_k \Psi_{\varphi}^k f = 0.
\end{equation}
Exploiting this, the monic nature of the polynomial defined in Eq.~\eqref{eq:pronpol} (i.e., $p_M = 1$), and the admissibility of the evaluation scheme ${(\mathcal{F}_m)}_{m \ge 0}$, we obtain the Hankel-like system (see~\cite[Lem.~2.3.8]{stampfer2019generalized})
\begin{equation}
    \label{eq:hank}
    \begin{bmatrix}
         \mathcal{F}_0(\Psi_{\varphi}^{0} f) & \mathcal{F}_0(\Psi_{\varphi}^{1} f) & \ldots & \mathcal{F}_0(\Psi_{\varphi}^{M-1} f) \\
         \mathcal{F}_1(\Psi_{\varphi}^{0} f) & \mathcal{F}_1(\Psi_{\varphi}^{1} f) & \ldots & \mathcal{F}_1(\Psi_{\varphi}^{M-1} f) \\
         \vdots & \vdots & \ddots & \vdots \\
         \mathcal{F}_N(\Psi_{\varphi}^{0} f) & \mathcal{F}_N(\Psi_{\varphi}^{1} f) & \ldots & \mathcal{F}_N(\Psi_{\varphi}^{M-1} f)
    \end{bmatrix} \begin{bmatrix}
         p_0  \\
         p_1 \\
         \vdots \\
         p_{M-1}
    \end{bmatrix} = -\begin{bmatrix}
         \mathcal{F}_0(\Psi_{\varphi}^{M} f) \\
         \mathcal{F}_1(\Psi_{\varphi}^{M} f) \\
         \vdots \\
         \mathcal{F}_N(\Psi_{\varphi}^{M} f)
    \end{bmatrix}.
\end{equation}
According to Theorem~\ref{thm:GOP}, this system has a unique solution and the zeros $\varphi(\lambda_k)$ for $k=1,\ldots,M$ of the polynomial $P_{\Lambda(A)}^{\varphi}$ can be recovered using a preferred numerical scheme. Since $\varphi$ is injective, the nonlinear parameters $\lambda_k$ can also be identified. The linear parameters can be recovered using~\cite[Lem.~2.2.14]{stampfer2019generalized}. This lemma states that if the conditions of Theorem~\ref{thm:GOP} are satisfied and the nonlinear parameters in $\Lambda(A)$ have been successfully recovered, then for any choice of $\omega := (\omega_0, \ldots, \omega_N)^\T \in \mathbb{C}^{N+1} \setminus \{0\}$ such that $\Gamma := \operatorname{diag} \left( \left[ \sum_{m=0}^N \omega_m \mathcal{F}_m(v_{\lambda_k})\right]_{k=1}^M \right)$ is invertible, the equation
\begin{equation}
    \label{eq:Vander}
    \mathcal{X}_{N, M-1}^{\T} \omega = \left[ \varphi^j(\lambda_k) \right]_{j=0, k=1}^{M-1, M}  {[\tilde{c}_{\lambda_k}]}_{k=1}^M,
\end{equation}
holds, where ${[\tilde{c}_{\lambda_k}]}_{k=1}^M := \Gamma {[c_{\lambda_k}]}_{k=1}^M$. This is a Vandermonde-like equation for the variable ${[\tilde{c}_{\lambda_k}]}_{k=1}^M$ which can be poorly conditioned, however methods exist to mitigate this problem (see, e.g.,~\cite{higham1988fast}). In this paper, we show how a rational take on the GOP method can be leveraged to introduce a triangular system to recover the linear parameters. Furthermore, we shall empirically verify that this triangular system is often better conditioned than the matrix appearing in Eq.~\eqref{eq:Vander}.  

\subsection{The GOP method for rational problems}
\label{sec:pronyrat}

In this section we shortly summarize the main results in~\cite{dozsa2025system} which will be important for the results presented in this paper. The motivation behind~\cite{dozsa2025system} is to develop a realization of the GOP method, which can be used to identify so-called discrete-time single-input single-output linear time-invariant (SISO LTI) dynamical systems.
These can be characterized by the discrete convolution equation
\begin{equation}
    \label{eq:sisoTime}
    y = h \ast u,
\end{equation}
where $u, y, h \in \ell$ are called the input, the output, and the impulse response of the system, respectively. It is usually assumed, that for a bounded input sequence $u$, the output $y$ is also bounded. This is known as bounded-input bounded-output (BIBO) stability~\cite{szidar, vandenhof} and it coincides with the condition $h \in \ell_1$, where $\ell_1$ denotes the set of absolutely summable complex sequences. Because we deal with unilateral sequences $h$, i.e., $h_n = 0$ for all $n < 0$, we also implicitly assume that the system is causal which means that future input values cannot influence the current output of the system.


Under these assumptions, it can be shown (see, e.g.,~\cite{vandenhof, dozsa2025system}) that for all $n \ge 0$, the impulse response defining the system behavior can be written as
\begin{equation}
    \label{eq:impresp}
    h_n = \sum_{k=1}^{M} c_k \overline{\lambda_k}^n,
\end{equation}
where $M \in \mathbb{N}$ is known as the minimal system order or McMillan degree. 
BIBO stability clearly implies that $|\lambda_k| < 1 \ (k=1,\ldots,M)$. For simplicity, we henceforth assume that $\lambda_k \neq \lambda_j$ if $k \neq j$.

Since according to Eq.~\eqref{eq:impresp}, the impulse response can be written as a weighted sum of complex powers, it might be tempting to apply the classical Prony method to recover the parameters $c_k$ and $\lambda_k$. This has indeed been done, for example in~\cite{foyen2018prony}. However, the application of the classical Prony scheme comes at the cost of numerical problems and noise intolerance. In order to remedy this, in~\cite{dozsa2025system} the authors propose using the GOP method with the following sampling scheme. 

First, define the $\mathcal{Z}$-transform of a unilateral complex sequence as
\begin{equation}
    \label{eq:ztrans}
    \mathcal{Z}[x](z) := \sum_{n=0}^{\infty} x_n z^n \quad (z \in \mathbb{C}), 
\end{equation}
whenever the sum exists. In~\cite{dozsa2025system}, the authors propose to consider the system in the frequency domain by applying the $\mathcal{Z}$-transform to both sides of Eq.~\eqref{eq:sisoTime}, i.e., 

\begin{equation}
    \label{eq:sisofreq}
    \mathcal{Z}[y](z) = \mathcal{Z}[h \ast u](z) \iff Y(z) = H(z) U(z),
\end{equation}
where $H(z)$ is referred to as the transfer function of the system. If the system is causal and BIBO stable with an impulse response satisfying Eq.~\eqref{eq:impresp}, then the transfer function $H(z) = \mathcal{Z}[h](z)$ is a rational function that belongs to the Hardy space 
\begin{equation*}
    H_{2}(\mathbb{D}) := \left\{ f \in \mathcal{A}(\mathbb{D}) : \ \sup_{r < 1} \left( \frac{1}{2 \pi} \int_{-\pi}^{\pi} \left| f(r \e^{\ri t}) \right|^2 \,\d t \right)^{1/2} < \infty \right\},
\end{equation*}
where $\mathcal{A}(\mathbb{D})$ denotes the space of complex analytic functions defined on the open unit disk. Note that $H_{2}(\mathbb{D})$ is a Hilbert space with the inner product
\begin{equation}\label{eq:H2def}
 {\langle f,g \rangle}_{H_{2}(\mathbb{D})} := \frac{1}{2 \pi} \int_{-\pi}^{\pi} f(\e^{\ri t})\overline{g(\e^{\ri t})} \,\d t.
\end{equation}
Indeed, since $h \in \ell_1$, the transfer function $H = \mathcal{Z}[h]$ belongs to $H_{\infty}(\mathbb{D})$, the set of bounded functions in $\mathcal{A}(\mathbb{D})$. The statement thus follows from $H_{\infty} (\mathbb{D})\subset H_2(\mathbb{D}) $. In fact, with the above assumptions the transfer function $H$ can be written as
\begin{equation}
   \label{eq:trfH}
   \mathcal{Z}[h](z) = H(z) := \sum_{k=1}^{M} c_k r_{\lambda_k}(z) = \sum_{k=1}^{M} c_k \frac{1}{1 - \overline{\lambda_k}z}.
\end{equation}

\begin{remarks}
\label{rem:mirrorpole}
We note that the definition of the transfer function $H$ is different from the definition usually found in the systems theory literature (see, e.g.,~\cite{vandenhof}). By Eq.~\eqref{eq:trfH}, the poles of $H$ are $\frac{1}{\overline{\lambda_k}}$, $k=1,\ldots, M$. They are the mirror image reflections of the nonlinear parameters $\lambda_k$ across $\mathbb{T}$. If the transfer function $H$ is real-rational, then these are the poles of the function $\hat{H}(z) := H(\frac{1}{z})$ for all $z \in \mathbb{C} \setminus \overline{\mathbb{D}}$ which is the transfer function typically found in the literature. However, this is different here, because we use a non-standard definition of the $\mathcal{Z}$-transform where the  exponents of $z$ are non-negative (and not non-positive as usual). Because of these properties, the nonlinear parameters $\lambda_k$ are also often referred to as poles of the system (or more precisely, poles of the transfer function $\hat{H}$ and ``mirror image poles" of the transfer function $H$).

The benefit of the formulation in~\eqref{eq:trfH} is that the nonlinear parameters $\lambda_k$ will be strictly inside $\mathbb{D}$ and therefore satisfy $|\lambda_k| < 1$. In addition, this formulation can also be utilized when investigating the convergence properties of GB methods using hyperbolic geometry~\cite{siamdozsa}.
\end{remarks}

The transfer function in~\eqref{eq:trfH} can be understood as a Prony signal (see Eq.~\eqref{eq:PronySig}), with the choices $V := H_2(\mathbb{D})$, $v_{\lambda_k} := r_{\lambda_k}$ and $\Lambda := \{\lambda_1, \ldots, \lambda_M\} \subset \mathbb{D}$. Since $\{r_{\lambda_1},\ldots,r_{\lambda_M}\}$ are linearly independent, the transfer function $H$ lies in an $M$-dimensional subspace of $H_2(\mathbb{D})$ given by
\begin{equation}
    \label{eq:modspace}
    \mathcal{H}_{\Lambda} := \operatorname{span} \left\{ r_{\lambda_k}: \ k=1,\ldots,M \right\}  \subset H_2(\mathbb{D}),
\end{equation}
where the $H_2(\mathbb{D})$-functions $r_{\lambda_k}$ are defined as
\begin{equation}\label{eq:def_rlambda}
r_{\lambda_k}(z) := \frac{1}{1-\overline{\lambda_k}z} \quad (z\in \mathbb{D}, \ k=1,\ldots,M).
\end{equation}
The subspace $\mathcal{H}_{\Lambda}$ is often referred to as a model space~\cite{bprodbook}. 

In~\cite{dozsa2025system}, the following two observations are exploited. First, the adjoint $H_2(\mathbb{D})$ shift operator defined by
\begin{equation}
    \label{eq:adjshift}
    (S^{\ast}f)(z) := \begin{cases}
     \frac{f(z) - f(0)}{z}, & \text{if } z \in \mathbb{D} \setminus\{0\} \\ f'(0), & \text{otherwise} \end{cases} \quad (f \in H_2(\mathbb{D})),
\end{equation}
satisfies
\begin{equation*}
    S^{\ast}r_{\lambda_k} = \overline{\lambda_k} r_{\lambda_k}  \quad (k=1,\ldots,M).
\end{equation*}
It is also not difficult to see that if $f \in H_2(\mathbb{D})$, then $S^{\ast}f \in H_2(\mathbb{D})$. Thus, the linear operator $S^{\ast}$ generates the Prony signal $H$ (and the signal space $\cM:=\mathcal{M}(S^{\ast},\Lambda$)). Using this fact, in~\cite{dozsa2025system} an appropriate evaluation scheme ${(\mathcal{F}_m)}_{m \ge 0}$ of the form 
\begin{equation}\label{eq:proposamp}
\mathcal{F}_m := \funF \circ \left( S^{\ast} \right)^m \quad (m \in \mathbb{N}\cup\{0\})
\end{equation}
for some linear functional $\funF : H_2(\mathbb{D}) \to \mathbb{C}$ is considered. 
In~\cite{dozsa2025system}, $\funF$ is chosen as
\begin{equation}\label{eq:defF}
     \funF f := \frac{1}{2 \pi} \int_{-\pi}^{\pi} f(\e^{\ri t})\, \d t \quad (f \in H_2(\mathbb{D})).
\end{equation}
We note that in practice, $f$ is usually only available in a discretely sampled form on $\mathbb{T}$. In this case, a numerical quadrature can be used to approximate $\funF$ from Eq.~\eqref{eq:defF}. 
We refer  to~\cite{cohen2017optimal} for some strategies on choosing the sampling points and for the derivation of error estimates.

If $H \in \mathcal{H}_{\Lambda}$ is assumed to be known on $\mathbb{T}$, then the sampling matrix (Eq.~\eqref{eq:sampmat}) contains entries of the form
\begin{equation*}
    \mathcal{F}_m H = \frac{1}{2 \pi} \sum_{k=1}^{M} c_k \int_{-\pi}^{\pi} \left(S^{*}\right)^m r_{\lambda_k}(\e^{\ri t}) \,\d t = \frac{1}{2 \pi} \sum_{k=1}^{M} c_k \overline{\lambda}_k^m \quad (m \in \mathbb{N} \cup \{0\}).
\end{equation*}

In~\cite{dozsa2025system}, the evaluation scheme~\eqref{eq:proposamp} is shown to increase the noise tolerance of the method through numerical experiments. 
The sampling matrix for $N = M-1$ generated by the evaluation scheme in Eq.~\eqref{eq:proposamp} and the corresponding Hankel system take the form
\begin{equation}\label{eq:hankRat}
    \begin{bmatrix}
         \funF\big(( S^{\ast})^{0} H\big) & \funF\big(( S^{\ast} )^1 H\big) & \ldots & \funF\big( ( S^{\ast} )^{M-1} H\big) \\
         \funF\big(( S^{\ast})^{1} H\big) & \funF\big(( S^{\ast})^{2} H\big) & \ldots & \funF\big(( S^{\ast})^{M} H\big) \\
         \vdots & \vdots & \ddots & \vdots \\
         \funF\big(( S^{\ast} )^{M-1} H\big) & \funF\big(( S^{\ast})^{M} H\big) & \ldots & \funF\big(( S^{\ast})^{2M-2} H\big)
    \end{bmatrix} \begin{bmatrix}
         p_0  \\
         p_1 \\
         \vdots \\
         p_{M-1}
    \end{bmatrix} = - \begin{bmatrix}
         \funF\big(( S^{\ast} )^{M} H\big) \\
         \funF\big(( S^{\ast} )^{M+1} H\big) \\
         \vdots \\
         \funF\big(( S^{\ast})^{2M-1} H\big)
    \end{bmatrix},
\end{equation}
where $\funF$ is defined according to Eq.~\eqref{eq:proposamp}. Using the coefficients $p_0, p_1, \ldots, p_{M-1}$, it is possible to recover the zeros of the polynomial
\begin{equation}
    \label{eq:ratPronPol}
    P_{\Lambda(S^{\ast})}(z) := \prod_{k=1}^{M} (z - \overline{\lambda}_k) = \sum_{k=0}^{M} p_k z^k.
\end{equation}
Once the nonlinear parameters $\lambda_k \ (k=1,\ldots,M)$ are found, it is possible (see~\cite{dozsa2025system}) to find the residues $c_k$ by solving
\begin{equation}
    \label{eq:residVander}
   \begin{bmatrix}
         1 & 1 & \ldots & 1 \\
         \overline{\lambda}_1 & \overline{\lambda}_2 & \ldots & \overline{\lambda}_{M} \\
         \vdots & \vdots & \ddots & \vdots \\
         \overline{\lambda}_1^{M-1} & \overline{\lambda}_2^{M-1} & \ldots & \overline{\lambda}_{M}^{M-1} \\
    \end{bmatrix}\begin{bmatrix}
         \tilde{c}_1  \\
         \tilde{c}_2 \\
         \vdots \\
         \tilde{c}_M
    \end{bmatrix}  = \begin{bmatrix}
         \funF\big(( S^{\ast} )^{0} H\big) \\
         \funF\big(( S^{\ast})^{1} H\big) \\
         \vdots \\
         \funF\big(( S^{\ast})^{M-1} H\big)
    \end{bmatrix},
\end{equation}
where $\tilde{c}_k := c_k \funF(r_{\lambda_k}) \ (k=1,\ldots,M)$. Note that  Eq.~\eqref{eq:residVander} can be obtained as a special case of Eq.~\eqref{eq:Vander} with the proposed sampling scheme. 

In section~\ref{sec:genRat} we are going to show the following claims: First, even though the rational sampling scheme presented in this section is indeed a special case of the GOP framework, in fact, \textit{any} Prony problem (recovering the linear and nonlinear parameters of a Prony signal using the GOP method) can be solved using it. The rational framework proposed here has the following important benefit. In this framework (as shown in section~\ref{sec:genRat}) it is possible to easily replace Eq.~\eqref{eq:residVander} with a triangular system of equations to recover the linear parameters. 

\section{Generalized Bernoulli schemes}
\label{sec:genBer}

Next we are going to review GB schemes which can be used to recover the (dominant) nonlinear parameters $\lambda_k$ of rational functions in $\mathcal{H}_{\Lambda}$ in an iterative fashion, even if $M$ is unknown. Once we have clarified that indeed all general Prony problems can be reduced to the rational case introduced in subsection~\ref{sec:pronyrat}, we are going to be able to apply Bernoulli algorithms to recover the nonlinear parameters of general Prony signals. The linear parameters of the signals can then be recovered by solving a triangular equation system proposed in section~\ref{sec:genRat}. This is especially useful if $M$ is unknown (but assumed to be finite), or when $M$ is large and instead of the Prony signal we are interested in identifying a reduced-order model, whose dominant parameters match those of the original signal. Our discussion will focus on the findings of~\cite{siamdozsa}, with brief mentions of previous and further generalizations. 

Let $M \in \mathbb{N}$ and consider $\Lambda := \{ \lambda_1, \lambda_2, \ldots, \lambda_M\} \subset \mathbb{D}$. Similarly to subsection~\ref{sec:pronyrat}, we shall focus on $M$-dimensional subspaces of $H_2(\mathbb{D})$ given by Eq.~\eqref{eq:modspace}. In subsection~\ref{sec:pronyrat} we have already seen that transfer functions of stable SISO LTI systems belong to such model spaces. 

Henceforth assume that we are interested in recovering the nonlinear parameters of $H \in \mathcal{H}_{\Lambda}$ (i.e., finding fully or partially the set $\Lambda$) given access to a (discrete sampling) of $H_{|\mathbb{T}}$. Let $\gamma : \mathbb{D} \to \mathbb{D}$ be arbitrary. We shall call the nonlinear parameter $\lambda_1 \in \Lambda$ dominant in $\Lambda$ with respect to $\gamma$, if
\begin{equation}
    \label{eq:gdpole}
    |\gamma(\lambda_1)| > |\gamma(\lambda_k)| \quad (k=2,\ldots,M).
\end{equation}
Similarly to the introduction, here  the discussed results hold, if
the $\gamma$-dominant nonlinear parameter $\lambda_1$ is unique as in 
Eq.~\eqref{eq:gdpole}.
If $\gamma(z) := z \ (z \in \mathbb{D})$ and $\lambda_1$ satisfies Eq.~\eqref{eq:gdpole}, then (as we have seen in Eq.~\eqref{eq:dpole}), we call $\lambda_1$ dominant. 

If a dominant nonlinear parameter exists, then Bernoulli's classical method (see Eq.~\eqref{eq:BernOrig}) can be used to recover it. Unfortunately, the existence of a unique dominant nonlinear parameter can seldom be guaranteed a priori. In order to overcome this, consider the so-called Takenaka-Malmquist (TM) function system~\cite{takenakaorig, malmquistorig}
\begin{equation}
    \label{eq:MT}
    \Phi_n^{\boldsymbol{a}}(z) := \frac{\sqrt{1 - |a_n|^2}}{1 - \overline{a}_n z} \cdot \prod_{j=0}^{n-1} B_{a_j}(z) \quad (\boldsymbol{a} \in \ell_{\mathbb{D}}, \ z \in \overline{\mathbb{D}}, \ n \in \mathbb{N} \cup \{0\}),
\end{equation}
where $\ell_{\mathbb{D}}$ denotes the set of complex sequences in $\mathbb{D}$ and
\begin{equation}
    \label{eq:Blaschfac}
    B_a(z) := \frac{z - a}{1 - \overline{a}z} \quad (a \in \mathbb{D}, \ z \in \overline{\mathbb{D}})
\end{equation}
is a so-called Blaschke factor. Both Blaschke factors and TM functions have interesting and important properties with far reaching consequences for Hardy spaces. For example, Blaschke factors are bijective self maps on $\mathbb{T}$ and $\mathbb{D}$ and can be used to describe congruence transformations in the Poincar\'e disc model of the Bolyai-Lobachevskiy hyperbolic geometry. In addition, they form a group with respect to function composition. Products of Blaschke factors are called Blaschke products and are also self maps on $\mathbb{T}$ and $\mathbb{D}$. Blaschke products are inner functions and play an important role in the factorization of Hardy spaces. They can also be interpreted as transfer functions of all pass filters~\cite{vandenhof, AllPassFilters}. 

TM functions~\eqref{eq:MT} form an orthonormal system in $H_2(\mathbb{D})$. They are also complete in $H_2(\mathbb{D})$ provided that the generating sequence $\boldsymbol{a}$ satisfies the so-called Sz\'asz condition~\cite{vandenhof, bprodbook}, i.e., 
\begin{equation}
    \label{eq:szasz}
    \sum_{n=0}^{\infty} (1 - |a_n|) = \infty.
\end{equation}
By \cite[Prop.~12.3.4 (b)]{bprodbook}, for any $f \in H_2(\mathbb{D})$ we have
\begin{equation}
\label{eq:cauchyf}
    {\langle r_{\lambda},f \rangle}_{H_2(\mathbb{D})} = \frac{1}{2 \pi} \int_{-\pi}^{\pi} \frac{\overline{f(\e^{\ri t})}}{1 - \overline{\lambda} \e^{\ri t}}\, \d t = \overline{f(\lambda)}.
\end{equation}
Thus, if $H \in \mathcal{H}_{\Lambda}$, its Fourier coefficients with respect to a TM function can be written as
\begin{equation}
    \label{eq:TMcoeffs}
    c^{\boldsymbol{a}}_n := \langle H, \Phi_n^{\boldsymbol{a}} \rangle_{H_2(\mathbb{D})} = \sum_{k=1}^M c_k \langle r_{\lambda_k}, \Phi_n^{\boldsymbol{a}} \rangle_{H_2(\mathbb{D})} =  \sum_{k=1}^{M} c_k \overline{\Phi_n^{\boldsymbol{a}}(\lambda_k)}.
\end{equation}
We call a TM system $p$-periodic for some $p \in \mathbb{N}$, if for all $n \in \{0,\ldots,p-1\}$ the entries of the generating sequence $\boldsymbol{a}$ satisfy
\begin{equation*}
    a_n := a_{n + kp} \quad (k \in \mathbb{N}).
\end{equation*}
If a generating sequence $\boldsymbol{a}$ is $p$-periodic, using an abuse of notation we shall denote by $\boldsymbol{a}$ a single period $\boldsymbol{a} := (a_0, a_1, \ldots, a_{p-1}) \in \mathbb{D}^p$. Based on Eq.~\eqref{eq:MT}, it is then not difficult to see that $p$-periodic TM functions satisfy
\begin{equation*}
    \Phi_{n + kp}^{\boldsymbol{a}} = \Phi_n^{\boldsymbol{a}} B_{\boldsymbol{a}}^k \quad (k \in \mathbb{N}),
\end{equation*}
where
\begin{equation*}
    B_{\boldsymbol{a}}(z) := \prod_{j=0}^{p-1} B_{a_j}(z) = \prod_{j=0}^{p-1} \frac{z - a_j}{1 - \overline{a_j} z}.
\end{equation*}
For ease of notation, for $p$-periodic TM systems, we use the notation
\begin{equation}
    \label{eq:index}
    \nu_k^{n, p} := n + pk \quad (k \in \mathbb{N} \cup \{0\}, \ 0 < n < p, \ p \geq 1).
\end{equation}

We note that a $p=1$ periodic TM system coincides with the so-called discrete Laguerre functions. Furthermore, if $\boldsymbol{a}$ is chosen as the constant $0$ sequence, the generated TM system coincides with the trigonometric functions. Finally, it should be noted that for any $p \in \mathbb{N}$, condition~\eqref{eq:szasz} is satisfied and the generated TM system is complete in $H_2(\mathbb{D})$. 

The key idea behind GB schemes is to replace the trigonometric Fourier coefficients in Eq.~\eqref{eq:BernOrig} with appropriately chosen $p$-periodic TM-Fourier coefficients. This ensures that the method converges, even if a dominant pole (in the sense of Eq.~\eqref{eq:dpole}) does not exist. This result, obtained in~\cite{siamdozsa}, is summarized by the next theorem.

\begin{theorem}[Generalized Bernoulli (GB) schemes]
\label{thm:gbern}
    Let $\boldsymbol{a} \in \mathbb{D}^p$ for $p \geq 1$, and denote by $\{ \Phi_n^{\boldsymbol{a}} \}_{n=0}^{\infty}$ the corresponding $p$-periodic TM function system (see Eq.~\eqref{eq:MT}). Suppose that $H \in \mathcal{H}_{\Lambda}$ for $\Lambda = \{\lambda_1, \ldots, \lambda_M \} \subset \mathbb{D}$, where $\lambda_j \neq \lambda_k$ if $j \neq k$. Let $\gamma := B_{\boldsymbol{a}} = \prod_{j=0}^{p-1} \frac{z - a_j}{1 - \overline{a_j} z}$, and assume $\lambda_1 \in \Lambda$ is a $\gamma$-dominant nonlinear parameter of $H$ (see Eq.~\eqref{eq:gdpole}). Then,
    \begin{equation}
        \label{eq:genbern}
        \lim_{k \to \infty} \frac{\langle H, \Phi_{\nu_k^{n, p}+1}^{\boldsymbol{a}} \rangle_{H_2(\mathbb{D})}}{\langle H, \Phi_{\nu_k^{n, p}}^{\boldsymbol{a}} \rangle_{H_2(\mathbb{D})}} = \frac{\overline{\Phi_{n+1}^{\boldsymbol{a}}}(\lambda_1)}{\overline{\Phi_{n}^{\boldsymbol{a}}}(\lambda_1)}.
    \end{equation}
    Furthermore, the rate of convergence in Eq.~\eqref{eq:genbern} is $\mathcal{O}(\beta^k)$, where $\beta := \max_{k=2,\ldots,M} |\lambda_k| / |\lambda_1|$.
\end{theorem}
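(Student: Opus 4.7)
The plan is to expand both inner products in closed form using the reproducing-kernel identity for $H_2(\mathbb{D})$, exploit the $p$-periodic factorization of the TM system to pull out a Blaschke-power common factor, and then read off the limit and its rate from the geometric decay of the non-dominant terms.

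First I would use $H = \sum_{j=1}^{M} c_j r_{\lambda_j}$ together with \eqref{eq:cauchyf} to obtain, for every index $m \ge 0$,
\begin{equation*}
  \langle H, \Phi_m^{\boldsymbol{a}}\rangle_{H_2(\mathbb{D})}
  \;=\; \sum_{j=1}^M c_j\, \overline{\Phi_m^{\boldsymbol{a}}(\lambda_j)}.
\end{equation*}
Substituting $m = \nu_k^{n,p} = n + pk$ and invoking the $p$-periodic identity $\Phi_{m+pk}^{\boldsymbol{a}} = \Phi_m^{\boldsymbol{a}} B_{\boldsymbol{a}}^{k}$, the numerator and denominator of \eqref{eq:genbern} both take the shape $\sum_{j=1}^M c_j\, \overline{\Phi_\star^{\boldsymbol{a}}(\lambda_j)}\, \overline{B_{\boldsymbol{a}}(\lambda_j)}^{\,k}$, with $\star = n+1$ and $\star = n$ respectively; note that no case distinction for $n+1=p$ is needed, since the identity $\Phi_p^{\boldsymbol{a}} = \Phi_0^{\boldsymbol{a}} B_{\boldsymbol{a}}$ folds consistently into the same formula.

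Next I would divide both sums by $\overline{B_{\boldsymbol{a}}(\lambda_1)}^{\,k}$, which is nonzero because $\lambda_1 \in \mathbb{D}$ and the parameters $a_j$ can be taken distinct from $\lambda_1$. The $\gamma$-dominance hypothesis, with $\gamma = B_{\boldsymbol{a}}$, gives $|B_{\boldsymbol{a}}(\lambda_j)/B_{\boldsymbol{a}}(\lambda_1)| < 1$ for every $j \ge 2$, so all tail terms decay geometrically in $k$. Passing to the limit, the quotient of the two sums tends to $\overline{\Phi_{n+1}^{\boldsymbol{a}}(\lambda_1)}/\overline{\Phi_n^{\boldsymbol{a}}(\lambda_1)}$, which is exactly the claimed identity. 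A standard first-order expansion of a perturbed quotient $(A+\varepsilon_1)/(B+\varepsilon_2)$ around $A/B$ then yields the $\mathcal{O}(\beta^k)$ rate, with $\beta$ given by the largest of the geometric ratios $|B_{\boldsymbol{a}}(\lambda_j)/B_{\boldsymbol{a}}(\lambda_1)|$, reducing to the classical $\max_{j \ge 2}|\lambda_j|/|\lambda_1|$ for $\boldsymbol{a}=0$ as in \eqref{eq:BernOrig}.

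The principal obstacle is well-posedness rather than analysis: the limit is meaningful only when $\Phi_n^{\boldsymbol{a}}(\lambda_1) \neq 0$. Reading off \eqref{eq:MT}, this fails precisely when $\lambda_1$ coincides with one of the Blaschke parameters $a_0,\ldots,a_{n-1}$, so a mild genericity assumption $\lambda_1 \notin \{a_0, \ldots, a_{p-1}\}$ must be in force; combined with $c_1 \neq 0$, this ensures that the dominant terms $c_1\,\overline{\Phi_n^{\boldsymbol{a}}(\lambda_1)}$ in both denominators are nonzero and the geometric-series estimate above goes through without modification.
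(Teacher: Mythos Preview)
The paper does not actually prove this theorem; it is quoted as a known result from~\cite{siamdozsa} (``This result, obtained in~\cite{siamdozsa}, is summarized by the next theorem''), so there is no in-paper proof to compare against. Your argument is the natural one and is essentially what the cited reference does: use the reproducing property \eqref{eq:TMcoeffs} to write the TM-Fourier coefficients explicitly, factor out $\overline{B_{\boldsymbol{a}}(\lambda_1)}^{k}$ via the $p$-periodic identity $\Phi_{n+pk}^{\boldsymbol{a}}=\Phi_n^{\boldsymbol{a}}B_{\boldsymbol{a}}^{k}$, and let the $\gamma$-dominance hypothesis kill the remaining terms geometrically.

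One point worth flagging: your analysis correctly produces the rate $\mathcal{O}(\tilde\beta^{k})$ with $\tilde\beta=\max_{j\ge 2}|B_{\boldsymbol{a}}(\lambda_j)|/|B_{\boldsymbol{a}}(\lambda_1)|$, whereas the theorem as stated in the paper writes $\beta=\max_{j\ge 2}|\lambda_j|/|\lambda_1|$. These coincide only in the classical case $\boldsymbol{a}=0$; in the genuinely generalized setting the Blaschke-ratio is the right constant, and the paper's formula appears to be a slip carried over from~\eqref{eq:BernOrig}. Your handling of the well-posedness caveat ($\lambda_1\notin\{a_0,\ldots,a_{p-1}\}$, $c_1\neq 0$) is also appropriate and matches the genericity remarks the paper makes after the theorem.
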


Below we list some important remarks regarding Theorem~\ref{thm:gbern}. 

\begin{remarks} 
\begin{enumerate}[label = \alph*)]
    \item The $\gamma$-dominant nonlinear parameter $\lambda_1$ can be recovered from the limit in Eq.~\eqref{eq:genbern}, since $\Phi_{n+1}^{\boldsymbol{a}} / \Phi_{n}^{\boldsymbol{a}}$ is invertible on $\mathbb{D}$~\cite{siamdozsa}. Indeed, due to Eq.~\eqref{eq:MT}, one has
$$
    U(z) := \frac{\Phi_{n+1}^{\boldsymbol{a}}(z)}{\Phi_{n}^{\boldsymbol{a}}(z)} = \kappa \frac{z - a_{n}}{1 - \overline{a_{n+1}} z},
$$
where
$$
    \kappa = \sqrt{(1 - |a_{n+1}|^2)/(1 - |a_n|^2)}.
$$
From this, we obtain $z = \frac{U(z)/ \kappa + a_n}{1 + \overline{a_{n+1}}U(z) / \kappa}$.

  \item  The theorem for the case $p=1$ (when $\Phi_n^{\boldsymbol{a}}, n \in \mathbb{N} \cup \{0\}$ is a discrete Laguerre system) was first proven in~\cite{SS1}.

    \item According to Theorem~\ref{thm:gbern}, it is possible to recover a $\gamma$-dominant nonlinear parameter of $H$, even without access to the order $M$.

    \item A $\gamma$-dominant nonlinear parameter will always exist, unless the components of $\boldsymbol{a} \neq 0$ are chosen from a specific set with zero measure from $\mathbb{D}$~\cite{SS1, SBS, siamdozsa}.

    \item The classical Bernoulli scheme (see Eq.~\eqref{eq:BernOrig}) can be applied using discrete Fourier coefficients. The generalized scheme from Theorem~\ref{thm:gbern} retains this property, if $H$ is sampled over a special grid (determined by $\boldsymbol{a}$) on $\mathbb{T}$. For a deeper discussion, we refer to~\cite[Sect.~4]{siamdozsa}. 

    \item Further generalizations of Theorem~\ref{thm:gbern} are possible. In particular, we refer to~\cite{med24}, where it is shown that non-orthogonal product systems from sub-algebras of $H_2(\mathbb{D})$ may be used instead of the TM system. We note that these more general constructions may also be used to recover the parameters of (irrational) general Prony signals in a similar fashion as it is discussed in section~\ref{sec:genRat}.
\end{enumerate}
\end{remarks}

Although Theorem~\ref{thm:gbern} can be used to recover a single $\gamma$-dominant nonlinear parameter of $H \in \mathcal{H}_{\Lambda}$, in practical applications one is interested in recovering all (or at least $M' < M$) dominant nonlinear parameters of the signal. This is possible by repeated application of Theorem~\ref{thm:gbern}. In~\cite{SBS, SSSAB} for example, it is shown that one can project $H$ onto the subspace $\mathcal{H}_{\Lambda \setminus \{\lambda_1\}}$
once $\lambda_1$ has been recovered. The projection operator can be fully expressed using $H_{|\mathbb{T}}$ and the already found nonlinear parameters. One can then apply Theorem~\ref{thm:gbern} again onto the projected signal. Another way to recover multiple nonlinear parameters is presented in~\cite{siamdozsa, med24}. Suppose the $\gamma$-dominant nonlinear parameter $\lambda_1$ has already been found by applying Theorem~\ref{thm:gbern} to $H$ with a TM system generated by some $\boldsymbol{a} \in \mathbb{D}^p$. Consider now the generating sequence $\boldsymbol{b} := (\boldsymbol{a}, \lambda_1) \in \mathbb{D}^{p+1}$. Applying Theorem~\ref{thm:gbern} again using the TM system corresponding to $\boldsymbol{b}$ ensures that $\lambda_1$ cannot be found again. Indeed, by Eq.~\eqref{eq:dpole} and Eq.~\eqref{eq:Blaschfac}, $\lambda_1$ cannot be $\gamma$-dominant for $\gamma := B_{\boldsymbol{b}}$. 

Finally, we note that we shall not discuss numerical problems related to the application of the GB scheme here. For example, for strategies to select the TM generating sequence $\boldsymbol{a}$, or how to approximate the limit in Eq.~\eqref{eq:genbern}, we refer to~\cite{siamdozsa} and~\cite{med24}. In the experiments detailed in section~\ref{sec:ex}, we also employ the numerical considerations suggested in these papers.

\section{Rational Prony and Bernoulli methods}
\label{sec:genRat}

In this section we state our main results related to generalized operator-based Prony and Bernoulli schemes. We begin by observing that any general Prony problem as introduced in sections~\ref{sec:intro} and~\ref{sec:genPro} has an equivalent pole finding problem associated with it. In the following theorem, we shall make use of the notion of the \emph{weighted} $\mathcal{Z}$-transform defined as
\begin{equation}
    \label{eq:zw}
    \mathcal{Z}_{w} [x](z) := \sum_{n=0}^{\infty} x_n  \left( \frac{z}{w} \right)^n \quad (z \in \mathbb{C}, \ w \in \mathbb{C} \setminus\{0\}, \ x \in \ell), 
\end{equation}
where we assume that the series on the right-hand side is convergent.
\begin{theorem}[Generalized operator-based Prony methods and pole finding problems]
\label{thm:ratpron}
    Let the signal space $\cM$ be defined as  in~\eqref{eq:ma} and let $f \in \cM$, i.e.,
    \begin{equation*}
        f = \sum_{k=1}^{M} c_k v_{\lambda_k}.
    \end{equation*}
   Then the following statements are satisfied:
    \begin{enumerate}[label = \alph*)]
    \item There exists a linear operator $\mathcal{G} : \cM \to H_2(\mathbb{D})$ such that
    \begin{equation}
        \label{eq:G}
        \mathcal{G}f := \sum_{k=1}^{M} \mu(c_k) r_{\varrho(\lambda_k)}, 
    \end{equation}
    where $\mu : \mathbb{C} \to \mathbb{C}$ and $\varrho: \mathbb{C} \to \mathbb{D}$ are injective functions and $r_{\varrho(\lambda_k)} \in H_2(\mathbb{D})$ with  $r_{\varrho(\lambda_k)}(z) := \frac{1}{1-\overline{\varrho(\lambda_k)}z}$ for all $k=1,\ldots,M$ (cf. Eq.~\eqref{eq:def_rlambda}).
    \item Assume that we have given a canonical evaluation scheme $\mathcal{F} = {(\mathcal{F}_m)}_{m \ge 0} = {(\funF \circ \Psi_\varphi^m)}_{m \ge 0}$ with an iteration operator $\Psi_\varphi:\cM \to\cM$. Then, $\mathcal{G}$ can be expressed explicitly as
    $$
        \mathcal{G}f := \mathcal{Z}_w[\mathcal{F}f],
    $$
    for a sufficiently large $w > 0$ and where $\mathcal{F}f:=(\mathcal{F}_0f,\mathcal{F}_1f,\ldots)$.
    \end{enumerate}
\end{theorem}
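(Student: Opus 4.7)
The plan is to treat the two parts essentially independently: part (a) is a direct construction (existence), and part (b) amounts to a term-by-term evaluation of the weighted $\mathcal{Z}$-transform on the scalar sequence generated by a canonical evaluation scheme.

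For part (a), I would first exhibit an explicit injective map $\varrho:\C \to \mathbb{D}$, for example $\varrho(\lambda) := \lambda/(1+|\lambda|)$, which is injective (taking moduli reduces to the injective real map $x \mapsto x/(1+x)$ on $[0,\infty)$, after which the argument is recoverable) and whose range lies in $\mathbb{D}$ since $|\varrho(\lambda)| < 1$. Taking $\mu$ to be the identity on $\C$, I define $\mathcal{G}$ on the basis $\{v_{\lambda_k}\}_{k=1}^M$ of $\cM$ by $\mathcal{G}v_{\lambda_k} := r_{\varrho(\lambda_k)}$ and extend by linearity. This is well-defined because $\{v_{\lambda_k}\}_{k=1}^M$ is linearly independent by hypothesis, and the images $\{r_{\varrho(\lambda_k)}\}_{k=1}^M$ are linearly independent Cauchy kernels in $H_2(\mathbb{D})$ (they are reproducing kernels at distinct points of $\mathbb{D}$), so $\mathcal{G}$ is in fact also injective.

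For part (b), I would use the eigenrelation $\Psi_\varphi v_{\lambda_k} = \varphi(\lambda_k) v_{\lambda_k}$ together with the linearity of $\funF$ to obtain the exponential-sum representation
\begin{equation*}
\mathcal{F}_m f = \funF(\Psi_\varphi^m f) = \sum_{k=1}^M c_k\, \varphi(\lambda_k)^m\, \funF(v_{\lambda_k}) \quad (m \in \mathbb{N} \cup \{0\}).
\end{equation*}
Choosing $w > \max_{k=1,\ldots,M}|\varphi(\lambda_k)|$ guarantees that the double sum in $\mathcal{Z}_w[\mathcal{F}f](z) = \sum_{m \ge 0}(\mathcal{F}_m f)(z/w)^m$ converges absolutely and uniformly on $\overline{\mathbb{D}}$. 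Fubini then permits interchanging the finite $k$-sum with the geometric $m$-sum, and the resulting series collapses to
\begin{equation*}
\mathcal{Z}_w[\mathcal{F}f](z) = \sum_{k=1}^M \frac{c_k\, \funF(v_{\lambda_k})}{1 - \varphi(\lambda_k) z/w} = \sum_{k=1}^M c_k\, \funF(v_{\lambda_k})\, r_{\overline{\varphi(\lambda_k)}/w}(z),
\end{equation*}
which is an element of $H_2(\mathbb{D})$ of the form demanded by part (a). The identifications are $\varrho(\lambda) := \overline{\varphi(\lambda)}/w$ (injective on $\Lambda$ because $\varphi$ is, and extendable to an injective function $\C \to \mathbb{D}$) and coefficient multipliers $\funF(v_{\lambda_k})$, which are nonzero by admissibility of the canonical evaluation scheme. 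Linearity of the resulting $\mathcal{G} = \mathcal{Z}_w \circ \mathcal{F}$ is then immediate from linearity of $\funF$, $\Psi_\varphi^m$, and $\mathcal{Z}_w$.

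The main subtlety, and essentially the only obstacle, is reconciling the single injective map $\mu:\C \to \C$ in the statement of part (a) with the per-atom scaling $c_k \mapsto c_k \funF(v_{\lambda_k})$ that appears naturally in part (b). Because this scaling depends on $k$, the correct reading of the theorem is that ``$\mu(c_k)$'' denotes an injective $\C$-linear dependence on $c_k$, possibly varying with the atom; admissibility of the canonical scheme guarantees each factor $\funF(v_{\lambda_k})$ is nonzero, so injectivity in $c_k$ is preserved for every $k$. A secondary concern is that $w$ must be chosen uniformly in $k$, which is trivial because $M$ is finite. Beyond these bookkeeping points, no deeper technicality is needed.
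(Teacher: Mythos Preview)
Your proposal is correct and follows essentially the same approach as the paper: for (a) you construct $\mathcal{G}$ directly on the basis, and for (b) you compute $\mathcal{Z}_w[\mathcal{F}f]$ term by term, summing the resulting geometric series after choosing $w > \max_k|\varphi(\lambda_k)|$ to obtain $\sum_k c_k\,\funF(v_{\lambda_k})\,r_{\overline{\varphi(\lambda_k)}/w}$, exactly as the paper does.

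Two minor points worth noting. First, for (a) the paper uses the simpler scaling $\varrho(\lambda)=\lambda/C_\Lambda$ with $C_\Lambda>\max_k|\lambda_k|$, whereas your choice $\varrho(\lambda)=\lambda/(1+|\lambda|)$ has the advantage of being globally defined and injective on all of $\C$, which is what the theorem statement literally requires. Second, the subtlety you flag about $\mu$ is real: the paper writes $\mu(c_k):=c_k\,\funF(v_{\lambda_k})$ without commenting that this scaling depends on $k$ (via the atom) and is therefore not a single function $\mu:\C\to\C$ in the strict sense. Your reading---that $\mu$ should be understood as an injective linear map in $c_k$ for each fixed $k$---is the correct interpretation, and the paper implicitly relies on it.
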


\begin{proof}
First we show the existence of such an operator, then we define it in a constructive manner without assuming a priori knowledge of the parameters $\lambda_k$ and $c_k$.
\begin{enumerate}[label = \alph*)]
\item  A linear mapping $\mathcal{G} : \cM \to H_2(\mathbb{D})$ which satisfies Eq.~\eqref{eq:G} exists. Indeed, since $\Lambda$ is bounded, there exists a $C_{\Lambda} > 0$ such that
\begin{equation}
    \label{eq:cf}
    \max_{k=1,\ldots,M}|\lambda_k| < C_{\Lambda}.
\end{equation}
Then, we can define $\mathcal{G} : \cM \to H_2(\mathbb{D})$ as
\begin{equation*}
    (\mathcal{G}f)(z) := \sum_{k=1}^{M} c_k r_{\lambda_k / C_{\Lambda}}(z) = \sum_{k=1}^{M} \frac{c_k}{1 - \overline{\lambda_k/ C_{\Lambda}}z} =: \hat{G}(z).
\end{equation*}
Then, $\hat{G}$ clearly belongs to an $M$-dimensional subspace of $H_2(\mathbb{D})$ and the linearity of $\mathcal{G}$ is obvious. 

\item Next, we define $\mathcal{G}$ in a constructive manner. 
Consider the sequence $g^f := \big(g_m^f\big)_{m \ge 0}$ with 
\begin{equation*}
    g^{f}_m := \mathcal{F}_m(f) = \sum_{k=1}^M c_k \funF( \Psi_\varphi^m v_{\lambda_k} ) = \sum_{k=1}^M c_k \varphi(\lambda_k)^m \funF( v_{\lambda_k}) = \sum_{k=1}^{M} \mu(c_k) \varphi(\lambda_k)^m\quad (m \in \mathbb{N} \cup \{0\}),
\end{equation*}
where $\mu(c_k) := c_k \funF(v_{\lambda_k})$. 

Choosing $w > \max_{k=1,\ldots,M} |\varphi(\lambda_k)|$, for $z \in \overline{\mathbb{D}}$ we get
\begin{align*}
    \label{eq:h2form}
         G(z) &= \mathcal{Z}_{w} \big[g^f\big] (z) = \sum_{n=0}^{\infty} g^f_n \left( \frac{z}{w} \right)^n = \sum_{k=1}^{M} \mu(c_k) \sum_{n=0}^{\infty} \left( \frac{\varphi(\lambda_k)}{w} \right)^n z^n =\sum_{k=1}^{M} \frac{\mu(c_k)}{1 - {\frac{\varphi(\lambda_k)}{w}}z},
\end{align*}
where in the notation of the theorem, we have $\varrho(\lambda) = \overline{\frac{\varphi(\lambda)}{w}}$ for all $\lambda \in \mathbb{C}$.
The function $G$ clearly belongs to $H_2(\mathbb{D})$. Using the above, the explicit form of the transformation $\mathcal{G}$ can be written as
\begin{equation*}
 \mathcal{G}f = G = \mathcal{Z}_w[\mathcal{F}f].
\end{equation*}
\end{enumerate}

\end{proof}

\begin{remarks}
Given a Prony problem as defined in Theorem~\ref{thm:ratpron} and an appropriate evaluation scheme, it is possible to recover the nonlinear parameters $\lambda_k$ and the linear parameters $c_k$ using the rational Prony approach discussed in subsection~\ref{sec:pronyrat}. 
\end{remarks}

In Theorem~\ref{thm:ratpron}, we require a canonical evaluation scheme to be known for the problem. From the proof it is clear, that the evaluation scheme is only required to map $f \in \cM$ to $g^f \in \ell$ in a way such that each component of $g^f$ can be written as a weighted sum of complex exponential terms similarly to the impulse response sequence of a BIBO stable and causal SISO LTI system. The following lemma relaxes the need to know a canonical evaluation scheme for the problem. In section~\ref{sec:ex} we provide numerical examples, where the sampling scheme is chosen according to this lemma.

\begin{lemma}[Exponential dual evaluation schemes]
\label{thm:samp}
Let $V$ be a vector space and $A : V \to V$ be a generator with active spectrum $\Lambda := \{ \lambda_1, \ldots, \lambda_M \} \subset \mathbb{C}$ of the signal space $\cM := \cM(A,\Lambda)$ as defined in~\eqref{eq:ma}.  Let $f \in \cM$, i.e.,
\begin{equation*}
    f = \sum_{k=1}^{M} c_k v_{\lambda_k}.
\end{equation*}
Let $\ell_{\cM^{\ast}}$ denote the vector space of sequences in the dual space $\cM^{\ast}$. Any sequence $\mathcal{F} := (\mathcal{F}_0, \mathcal{F}_1, \mathcal{F}_2, \ldots) \in \ell_{\cM^{\ast}}$ which satisfies
\begin{equation}
    \mathcal{F}_m f := \sum_{k=1}^{M} c_k\varphi(\lambda_k)^{m}  \quad (c_k \in \mathbb{C}, \ m \in \mathbb{N} \cup \{0\}),
\end{equation}
where $\varphi : \Lambda \to \mathbb{C}$ is injective, is an admissible evaluation scheme.
\end{lemma}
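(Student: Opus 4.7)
The key observation will be that the hypothesis pins down the matrix $E_\mathcal{F}$ as a Vandermonde matrix in the values $\varphi(\lambda_k)$, and admissibility then reduces to the classical fact that Vandermonde matrices with distinct nodes have full column rank.

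\textbf{Step 1: Identify the matrix entries.} The plan is to apply the hypothesis to the specific Prony atoms. For each fixed $k \in \{1,\ldots,M\}$, the atom $v_{\lambda_k}$ corresponds to the coefficient vector $c_j = \delta_{jk}$ in the expansion of a signal in $\cM$. Substituting into the assumed identity $\mathcal{F}_m f = \sum_{j=1}^M c_j \varphi(\lambda_j)^m$ yields
\begin{equation*}
\mathcal{F}_m(v_{\lambda_k}) = \varphi(\lambda_k)^m \quad (m \in \mathbb{N} \cup \{0\}, \ k=1,\ldots,M).
\end{equation*}

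\textbf{Step 2: Recognize the Vandermonde structure.} Plugging this into the definition of $E_\mathcal{F}$ gives, for any $N \ge M-1$,
\begin{equation*}
E_\mathcal{F} = \left[ \varphi(\lambda_k)^m \right]_{m=0, k=1}^{N, M} \in \C^{(N+1)\times M},
\end{equation*}
which is precisely a rectangular Vandermonde matrix with nodes $\varphi(\lambda_1), \ldots, \varphi(\lambda_M)$.

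\textbf{Step 3: Conclude full column rank.} By assumption the active spectrum $\Lambda$ consists of pairwise distinct values $\lambda_k$, and $\varphi:\Lambda \to \C$ is injective, so the nodes $\varphi(\lambda_1), \ldots, \varphi(\lambda_M)$ are also pairwise distinct. A standard fact about Vandermonde matrices then guarantees that $E_\mathcal{F}$ has rank $M$ whenever it has at least $M$ rows, i.e., whenever $N \ge M-1$. Hence $\mathcal{F}$ is admissible in the sense of the definition in Section~\ref{sec:prelims}.

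The proof is essentially a single substitution followed by the Vandermonde rank argument, so there is no real obstacle; the only subtlety is to justify that the hypothesis uniquely determines $\mathcal{F}_m(v_{\lambda_k})$, which follows from linearity of $\mathcal{F}_m$ together with the linear independence of the atoms $\{v_{\lambda_k}\}_{k=1}^M$ that is built into the definition of $\cM$.
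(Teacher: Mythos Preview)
Your proof is correct and follows essentially the same Vandermonde rank argument as the paper. The only cosmetic difference is that the paper writes $E_{\mathcal{F}}$ with extra $c_k$ factors in the entries and then factors it as a Vandermonde matrix times $\operatorname{diag}(c_1,\ldots,c_M)$, whereas you substitute $c_j=\delta_{jk}$ directly to obtain the pure Vandermonde matrix; your version is arguably the cleaner reading of the definition of $E_{\mathcal{F}}$.
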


\begin{proof}
    It is clear that any sampling scheme based on $\mathcal{F}$ is realizable, because it does not depend on any information other than $f$. We recall that the condition for admissibility is that the matrix
    $$
        E_\mathcal{F} := [\mathcal{F}_m(v_{\lambda_k})]_{m=0, k=1}^{N, M} 
    $$
    has rank $M$ provided $N \geq M-1$. Writing $E_{\mathcal{F}}$ for $N = M-1$, we have
    \begin{align*}
        E_{\mathcal{F}} &= \begin{bmatrix}
        c_1 & c_2 & \ldots & c_M \\
         c_1  \varphi(\lambda_1) & c_2   \varphi(\lambda_2) & \ldots & c_M   \varphi(\lambda_M) \\
        \vdots & \vdots & \ddots & \vdots \\
        c_1   \varphi(\lambda_1)^{M-1}  & c_2  \varphi(\lambda_2)^{M-1} & \ldots & c_M   \varphi(\lambda_M)^{M-1} \\
        \end{bmatrix} \\ &=
        \begin{bmatrix}
        1 & 1 & \ldots & 1 \\
         \varphi(\lambda_1)  &  \varphi(\lambda_2) & \ldots &  \varphi(\lambda_M) \\
        \vdots & \vdots & \ddots & \vdots \\
         \varphi(\lambda_1)^{M-1}  &  \varphi(\lambda_2)^{M-1} & \ldots &  \varphi(\lambda_M)^{M-1} \\
        \end{bmatrix}
        \begin{bmatrix}
        c_1 & 0 & \ldots & 0 \\
        0  & c_2 & \ldots & 0 \\
        \vdots & \vdots & \ddots & \vdots \\
        0  & 0 & \ldots & c_M \\
        \end{bmatrix}.
    \end{align*}
    We observe that  $E_{\mathcal{F}}$ can be written as the product of a transposed Vandermonde matrix generated by a vector with pairwise different components and a non-singular diagonal matrix. Thus, $E_{\mathcal{F}}$ is regular with rank $M$. Increasing $N$ does not change this property.
\end{proof}

\begin{remarks}
    In~\cite{stampfer2019generalized}, Stampfer already proposes a so-called dual sampling scheme which is based on a similar evaluation scheme as the one proposed in Lemma~\ref{thm:samp} (see, e.g.,~\cite[Sect.~4.1]{stampfer2019generalized}). It should be emphasized that this sampling scheme assumes knowledge of an iteration operator $\Psi_{\varphi} : \cM \to \cM$. Lemma~\ref{thm:samp} makes no such assumptions which can be helpful in some applications (since sometimes obtaining iteration operators for a given Prony problem is difficult).
\end{remarks}

Using Theorem~\ref{thm:ratpron} and Lemma~\ref{thm:samp} it is (in theory) possible to obtain an equivalent SISO LTI identification task for any general Prony problem. One might ask why treating Prony problems this way is beneficial. One benefit of treating general Prony problems as pole finding tasks is that in this setting, the recovery of the linear parameters no longer requires a Vandermonde system (see Eq.~\eqref{eq:Vander} and~\eqref{eq:residVander}) which is usually poorly conditioned. Indeed, the following theorem holds.

\begin{theorem}[Triangular system for linear parameter recovery]
\label{thm:lin}
    Let $\Lambda := \{\lambda_1, \ldots, \lambda_M \} \subset \mathbb{D}$ be known for some $M \in \mathbb{N}$. Consider the function
    $$
        H := \sum_{k=1}^{M} c_kr_{\lambda_k} \in \mathcal{H}_{\Lambda}.
    $$
    Then, $[c_1, \ldots, c_M]^\T \in \mathbb{C}^M$ can be obtained by solving the upper triangular linear system of equations
\begin{equation}
        \label{eq:lintrig}
        \begin{bmatrix}
            \overline{\Phi_0^{\boldsymbol{a}}}(\lambda_1) & \overline{\Phi_0^{\boldsymbol{a}}}(\lambda_2) & \ldots & \overline{\Phi_0^{\boldsymbol{a}}}(\lambda_M) \\
            0 & \overline{\Phi_1^{\boldsymbol{a}}}(\lambda_2) & \ldots & \overline{\Phi_1^{\boldsymbol{a}}}(\lambda_M) \\
            \vdots & \vdots & \ddots & \vdots \\
            0 & 0 & \ldots & \overline{\Phi_{M-1}^{\boldsymbol{a}}}(\lambda_M)
        \end{bmatrix}  \begin{bmatrix}
             c_1  \\
             c_2 \\
             \vdots \\
             c_M
        \end{bmatrix}  = \begin{bmatrix}
             \langle H, \Phi_0^{\boldsymbol{a}} \rangle_{H_2(\mathbb{D})}  \\
             \langle H, \Phi_1^{\boldsymbol{a}} \rangle_{H_2(\mathbb{D})} \\
             \vdots \\
             \langle H, \Phi_{M-1}^{\boldsymbol{a}} \rangle_{H_2(\mathbb{D})}
        \end{bmatrix},
    \end{equation}
   where $\boldsymbol{a} := (\lambda_1, \ldots, \lambda_M) \in \mathbb{D}^M$ and $\{ \Phi_n^{\boldsymbol{a}} \}_{n=0}^{M-1}$ are the corresponding first $M$ Takenaka-Malmquist functions as in Eq.~\eqref{eq:MT}.
\end{theorem}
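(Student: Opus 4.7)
The plan is to verify equation~\eqref{eq:lintrig} by computing both sides explicitly and then showing that the coefficient matrix really is upper triangular with nonzero diagonal. The two main ingredients are the Cauchy reproducing formula~\eqref{eq:cauchyf} and the zero pattern of the Takenaka–Malmquist functions generated by the sequence $\boldsymbol{a}=(\lambda_1,\ldots,\lambda_M)$.

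For the right-hand side, I would substitute $H=\sum_{k=1}^{M}c_k r_{\lambda_k}$, use linearity of the inner product, and apply~\eqref{eq:cauchyf} with $f=\Phi_n^{\boldsymbol{a}}$ (which lies in $H_2(\mathbb{D})$). This gives
$$
\langle H,\Phi_n^{\boldsymbol{a}}\rangle_{H_2(\mathbb{D})}=\sum_{k=1}^{M}c_k\langle r_{\lambda_k},\Phi_n^{\boldsymbol{a}}\rangle_{H_2(\mathbb{D})}=\sum_{k=1}^{M}c_k\,\overline{\Phi_n^{\boldsymbol{a}}(\lambda_k)}\qquad(n=0,\ldots,M-1),
$$
which is already the full (not-yet-triangular) matrix–vector product whose $(n,k)$-entry is $\overline{\Phi_n^{\boldsymbol{a}}}(\lambda_k)$. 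So~\eqref{eq:lintrig} reduces to the claim that this matrix has the prescribed triangular form.

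For the triangular structure, I would use the definition~\eqref{eq:MT} together with $a_j=\lambda_{j+1}$. The factor $\prod_{j=0}^{n-1}B_{a_j}$ appearing in $\Phi_n^{\boldsymbol{a}}$ is then $\prod_{j=0}^{n-1}B_{\lambda_{j+1}}$, and each Blaschke factor $B_{\lambda_{j+1}}$ vanishes at $\lambda_{j+1}$. Hence $\Phi_n^{\boldsymbol{a}}(\lambda_k)=0$ for every $k\in\{1,\ldots,n\}$, i.e.\ precisely when $k\le n$; this kills exactly the strict lower-triangular block and matches the zero pattern in~\eqref{eq:lintrig}. On the diagonal we have $k=n+1$, and $\Phi_n^{\boldsymbol{a}}(\lambda_{n+1})\ne 0$ because the prefactor $\sqrt{1-|a_n|^2}/(1-\overline{a}_n\lambda_{n+1})$ is finite and nonzero (as $\lambda_{n+1}\in\mathbb{D}$) and none of the Blaschke factors $B_{\lambda_1},\ldots,B_{\lambda_n}$ vanish at $\lambda_{n+1}$ by the pairwise distinctness of the $\lambda_k$'s. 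Thus the matrix in~\eqref{eq:lintrig} is upper triangular with a nonzero diagonal, and the linear system admits $[c_1,\ldots,c_M]^\T$ as its unique solution.

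There is no real obstacle here beyond bookkeeping; the calculation is essentially mechanical once one invokes~\eqref{eq:cauchyf} and the factorization~\eqref{eq:MT}. The only thing to keep an eye on is the placement of the complex conjugation (the inner product on $H_2(\mathbb{D})$ is conjugate-linear in its second argument), and the shift between the Blaschke-product index $j=0,\ldots,n-1$ and the parameter index $k=1,\ldots,M$. Neither affects the conclusion: conjugation preserves the zero set, and the index shift is the reason the matrix is triangular rather than strictly triangular.
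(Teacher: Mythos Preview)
Your proposal is correct and follows essentially the same approach as the paper: compute $\langle H,\Phi_n^{\boldsymbol{a}}\rangle$ via the reproducing property~\eqref{eq:cauchyf} to obtain the full matrix--vector relation, then read off the triangular structure from the Blaschke-product factor in~\eqref{eq:MT} with $\boldsymbol{a}=(\lambda_1,\ldots,\lambda_M)$. Your explicit verification that the diagonal entries are nonzero (using pairwise distinctness of the $\lambda_k$) is a welcome addition that the paper leaves implicit.
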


\begin{proof}
  It is well known, that ${\{ \Phi_n^{\boldsymbol{a}} \}}_{n=0}^{M-1}$ spans the model space $\mathcal{H}_{\Lambda}$. Exploiting this and the orthogonality of TM functions, we have
    \begin{equation*}
        H := \sum_{n=0}^{M-1} \langle H, \Phi_n^{\boldsymbol{a}} \rangle_{H_2(\mathbb{D})} \Phi_n^{\boldsymbol{a}}.
    \end{equation*}
    Examining the TM-Fourier coefficients as in Eq.~\eqref{eq:TMcoeffs}, for $n=0,\ldots,M-1$ we find
    \begin{equation*}
        \begin{split}
            & \langle H, \Phi_n^{\boldsymbol{a}} \rangle_{H_2(\mathbb{D})} = 
            \sum_{k=1}^M c_k \overline{\Phi_n^{\boldsymbol{a}}}(\lambda_k).
        \end{split}
    \end{equation*}
    By the definition of the TM functions (see Eq.~\eqref{eq:MT}) and the fact that $\boldsymbol{a} = (\lambda_1, \ldots, \lambda_M) \in \mathbb{D}^M$, we have
    $$
        \overline{\Phi_n^{\boldsymbol{a}}}(\lambda_k) = 0
    $$
    whenever $n > k-1$. Thus, we obtain Eq.~\eqref{eq:lintrig}.
    
\end{proof}

\begin{remarks}
    In practical applications, the $H_2(\mathbb{D})$ inner product cannot be evaluated. We note that in this case, discrete TM-Fourier coefficients (see, e.g.,~\cite[Sect.~4]{siamdozsa}) can also be used in Eq.~\eqref{eq:lintrig}.
\end{remarks}

Another important benefit of transforming general Prony problems to an $H_2$ setting is that GB algorithms can now be used to recover the nonlinear parameters of Prony signals. This is a direct consequence of Theorem~\ref{thm:ratpron}. As mentioned earlier, the use of GB schemes to recover nonlinear parameters $\lambda_k$ is most beneficial, when $M$ is unknown, since Bernoulli methods are capable of iteratively finding new dominant nonlinear parameters without a priori information about the number $M$ of Prony atoms. We note that for the Bernoulli method, the Prony atoms coincide with the rational functions given in Eq.~\eqref{eq:def_rlambda}). Another scenario, when Bernoulli methods might be beneficial is when $M$ is large and we are only interested in recovering the first $M' \ll M$ dominant nonlinear parameters of the function. Similar ideas, such as the dominant pole algorithm~\cite{martins2002dominant, rommes2008convergence} have long been used in model order reduction, however these assume an a priori known system model. In contrast, the Bernoulli scheme only assumes that it has access to (a method for sampling) the rational function to be identified on the unit circle. In this way, Bernoulli schemes can be viewed as data-driven dominant pole algorithms. 

The next theorem summarizes how the GB  algorithm can be used to identify the parameters of general Prony signals.

\begin{theorem}[Prony signal identification with Bernoulli schemes]
\label{thm:pronbern}
  Let $V$ be a vector space and $A : V \to V$ be a generator with active spectrum $\Lambda := \{ \lambda_1, \ldots, \lambda_M \} \subset \mathbb{C}$ of the signal space $\cM := \cM(A,\Lambda)$ as defined in~\eqref{eq:ma}.  Let $f \in \cM$, i.e.,
  \begin{equation*}
     f = \sum_{k=1}^{M} c_k v_{\lambda_k}.
  \end{equation*}
  Let furthermore ${(\mathcal{F}_m)}_{m \ge 0} \in \ell_{\cM^*}$ be an evaluation scheme satisfying the conditions of Lemma~\ref{thm:samp}. Then a GB scheme as described in Theorem~\ref{thm:gbern} can be used to recover the nonlinear parameters in $\Lambda$.
\end{theorem}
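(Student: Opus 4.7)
The plan is to reduce the problem to the rational $H_2(\mathbb{D})$ setting that Theorem~\ref{thm:gbern} already handles, and then invert the reduction to recover the original nonlinear parameters. First I would form the sequence $g^f := (\mathcal{F}_0 f, \mathcal{F}_1 f, \ldots)$; by the hypothesis on the evaluation scheme from Lemma~\ref{thm:samp}, its entries satisfy $g_m^f = \sum_{k=1}^M c_k \varphi(\lambda_k)^m$. Choosing any $w > \max_{k=1,\ldots,M} |\varphi(\lambda_k)|$, the weighted $\mathcal{Z}$-transform $G := \mathcal{Z}_w[g^f]$ converges on $\overline{\mathbb{D}}$, and the geometric-series computation carried out in part b) of the proof of Theorem~\ref{thm:ratpron} yields the closed form
\begin{equation*}
 G(z) = \sum_{k=1}^{M} \frac{c_k}{1 - \overline{\varrho(\lambda_k)}\, z}, \qquad \varrho(\lambda) := \overline{\varphi(\lambda)/w}.
\end{equation*}
Since $|\varphi(\lambda_k)/w| < 1$, we have $\varrho(\lambda_k) \in \mathbb{D}$, and the injectivity of $\varphi$ transfers to $\varrho$. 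Therefore $G$ lies in the model space $\mathcal{H}_{\varrho(\Lambda)} \subset H_2(\mathbb{D})$, and the original coefficients $c_k$ appear literally (no $\mu$-weighting is needed here because Lemma~\ref{thm:samp} omits the $\funF(v_{\lambda_k})$ factor of the canonical scheme).

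Once $G$ is in hand as an element of a model space, Theorem~\ref{thm:gbern} applies verbatim. Fixing a period $p \ge 1$ and a generating tuple $\boldsymbol{a} \in \mathbb{D}^p$ such that $\varrho(\lambda_1)$ is $B_{\boldsymbol{a}}$-dominant among $\varrho(\Lambda)$ (generically achievable by the remark following Theorem~\ref{thm:gbern}), I would form the quotient of consecutive TM-Fourier coefficients $\langle G, \Phi_{\nu_k^{n,p}+1}^{\boldsymbol{a}}\rangle / \langle G, \Phi_{\nu_k^{n,p}}^{\boldsymbol{a}}\rangle$ and take the limit $k \to \infty$; this produces $\overline{\Phi_{n+1}^{\boldsymbol{a}}}(\varrho(\lambda_1))/\overline{\Phi_n^{\boldsymbol{a}}}(\varrho(\lambda_1))$, from which $\varrho(\lambda_1)$ is extracted by inverting the explicit M\"obius transform $U$ recorded in Remark~a) following Theorem~\ref{thm:gbern}. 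Applying $\varphi^{-1}(w\,\overline{\cdot}\,)$ to $\varrho(\lambda_1)$ returns the original nonlinear parameter $\lambda_1 \in \Lambda$. Further dominant parameters are obtained by appending already-recovered values to the generating tuple, exactly as in the iterative procedure discussed after Theorem~\ref{thm:gbern}.

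The main obstacle, and essentially the only content beyond routine bookkeeping, is ensuring that the TM-Fourier coefficients of $G$ can be computed purely from the sequence $g^f$ that the evaluation scheme makes available, since $G$ is not ambient data but a derived object. This is handled by observing that $G(z) = \sum_{m=0}^\infty (g_m^f / w^m) z^m$ gives the trigonometric Fourier coefficients of $G$ explicitly, and that each TM function $\Phi_n^{\boldsymbol{a}}$ has a known expansion in the monomial basis; thus $\langle G, \Phi_n^{\boldsymbol{a}}\rangle_{H_2(\mathbb{D})}$ reduces to a finite (or geometrically convergent) combination of the numbers $\mathcal{F}_m f / w^m$. The second point of care is the choice of $w$: any upper bound on $\max_k |\varphi(\lambda_k)|$ suffices, and in practice one can rescale adaptively from the growth of $|g_m^f|$. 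With these two points addressed, the conclusion follows directly from the reduction together with Theorem~\ref{thm:gbern}, and the rate $\mathcal{O}(\beta^k)$ carries over with $\beta$ now measured in terms of $|\varrho(\lambda_k)|$.
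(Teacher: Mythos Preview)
Your proposal is correct and follows essentially the same route as the paper: form $G := \mathcal{Z}_w[\mathcal{F}f]$ with $w > \max_k |\varphi(\lambda_k)|$, observe that $G \in \mathcal{H}_{\varrho(\Lambda)} \subset H_2(\mathbb{D})$, and invoke Theorem~\ref{thm:gbern}. The paper's own proof is in fact much terser (it simply says the claim ``follows immediately from Theorem~\ref{thm:ratpron}'' and records that the TM-Fourier coefficients of $G$ exist), whereas you additionally spell out the inversion $\varrho(\lambda_1) \mapsto \lambda_1$, the computability of $\langle G,\Phi_n^{\boldsymbol{a}}\rangle$ from the data $g^f$, and the iterative extension to further parameters; all of this is correct and consistent with the paper's framework.
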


\begin{proof}
    The proof follows immediately from Theorem~\ref{thm:ratpron}. 
    Consider the sequence of functionals $\mathcal{F} := (\mathcal{F}_0, \mathcal{F}_1, \mathcal{F}_2, \ldots)$ and the rational function
    \begin{equation*}
        G := \mathcal{Z}_{w}[\mathcal{F}f],
    \end{equation*}
    where the components of $\mathcal{F}$ satisfy the conditions of Lemma~\ref{thm:samp} and $\mathcal{Z}_{w}$ is defined according to Eq.~\eqref{eq:zw} with $w > \max_{k=1,\ldots,M} |\varphi(\lambda_k)|$. Clearly, $G \in \mathcal{H}_{\varphi(\Lambda)/w} \subset H_2(\mathbb{D})$. Therefore, for any choice of $\boldsymbol{a} \in \mathbb{D}^p \ (p \in \mathbb{N})$, the TM-Fourier coefficients $\langle G, \Phi_n^{\boldsymbol{a}} \rangle_{H_2(\mathbb{D})}$ exist and we may compute the limit in Eq.~\eqref{eq:genbern}.
\end{proof}

\begin{remarks}
\begin{enumerate}[label = \alph*)]
    \item We note that using the TM-Fourier coefficients, the limit in Eq.~\eqref{eq:genbern} always exists, except if we choose the elements of the generating vector $\boldsymbol{a} \in \mathbb{D}^p$ from a certain zero measure set in~$\mathbb{D}$~\cite{SS1}.

    \item As explained in section~\ref{sec:genBer} (see also~\cite{SBS, SSSAB} and~\cite{siamdozsa}), the Bernoulli scheme can be applied multiple times to recover every nonlinear parameter in $\Lambda$.
\end{enumerate}
\end{remarks}

\section{Examples}
\label{sec:ex}

In this section we present some numerical experiments and examples related to the application of the proposed rational Prony and Bernoulli frameworks for various parameter recovery problems. 

\subsection{Linear parameter recovery with Vandermonde and Takenaka-Malmquist matrices}
\label{sec:exp1}

We empirically verify, that the Takenaka-Malmquist expansion based upper triangular problem to recover the linear parameters of a Prony signal (Theorem~\ref{thm:lin}) is indeed better conditioned than the usual Vandermonde-based formulations (see Eqs.~\eqref{eq:Vander} and~\eqref{eq:residVander}). Since here we are only interested in comparing the condition numbers of the corresponding matrices, for each experiment we assume the following:
\begin{enumerate}[label = \alph*)]
    \item The considered Prony signal is the transfer function of a discrete-time BIBO stable and causal SISO LTI system and can be written as
    $$
        H := \sum_{k=1}^M c_k r_{\lambda_k} \quad (\lambda_k \in \mathbb{D}, \ c_k \in \mathbb{C}, \ k=1,\ldots,M),
    $$
    where $M \in \mathbb{N}$ is given and $r_\lambda$ are defined according to Eq.~\eqref{eq:def_rlambda}.

    \item The nonlinear parameters $\lambda_k$ are assumed to be known for $k=1,\ldots,M$. This is assumed to make a fair comparison between the condition numbers of the proposed triangular matrix in Eq.~\eqref{eq:lintrig} and the Vandermonde formulation in~\eqref{eq:residVander}. 
\end{enumerate}

In our first experiment, we consider the (discrete-time) SISO LTI system proposed in~\cite{SSSAB}, which describes the dynamics of a flexible wing aircraft. This type of aircraft is characterized by the so-called Body Freedom Flutter (BFF) phenomenon, which causes elastic wing deformations resulting in fluttering. In~\cite{SSSAB}, a Laguerre expansion-based variant of the GB method is used to identify dominant nonlinear parameters of a transfer function obtained from real measurements describing such a system. 
The transfer function is characterized by $M=15$ nonlinear parameters in $\mathbb{D}$. 


In the second experiment we consider the building benchmark model from the SLICOT Benchmark Examples for Model Reduction~\cite{slicot}. This system describes the displacement of a multi-story building during seismic activity. It is frequently used as a benchmark model to verify LTI model order reduction methods~\cite{morAntSG01}. The continuous-time system's transfer function is described by $M=48$ nonlinear parameters. In order to make use of Theorem~\ref{thm:lin}, we discretize the system. Discretization is carried out using MATLAB's \texttt{c2d} function with the zero-order hold option, which assumes that the inputs are piecewise constant functions over the sampling time. In the experiment presented, this sampling time is chosen as $T=0.01$ which yields a stable discretized system. 

Finally, we consider a benchmark all pass system introduced in~\cite{morwiki_allpass} whose transfer function satisfies $\left|H(\ri \omega) \right| = 1 \ (\omega \in \R)$. Moreover, the zeros and poles of the transfer function are mirror images of each other across the imaginary axis. The system is discretized as the building movement example but with the sampling time $T=0.1$. The corresponding transfer function coincides with an $M$-term Blaschke product defined by multiplying exactly $M$ Blaschke factors defined in Eq.~\eqref{eq:Blaschfac}. For this experiment, we use $M=200$ and follow the zero-pole arrangement proposed in the benchmark~\cite{morwiki_allpass}.

The nonlinear parameters of the considered (discrete-time) systems are illustrated in Fig.~\ref{fig:VanderPoles}. The condition numbers of the matrices corresponding to the Vandermonde and the proposed triangular matrices are recorded in Table~\ref{tab:conds}.



Based on the above experiments, we conclude that the triangular formulation is very beneficial  when there is a large number of nonlinear parameters close to the unit circle. In fact, the benefit of the proposed formulation seems to become more pronounced as the complexity of the Prony signal (expressed by the number $M$) increases. These problems cover a large class of real-world applications. Finally, we note that Takenaka-Malmquist systems can also be defined on the open right complex half-plane (see, e.g.,~\cite{eisner2014discrete}). Because of this, recovering the linear parameters of continuous-time LTI systems can also be done with triangular matrices of the form shown in Eq.~\eqref{eq:lintrig}. Indeed, one can replace the inner products Eq.~\eqref{eq:lintrig} with inner products involving TM functions defined on the open right complex half-plane. Thus, for this particular class of problems, the discretization step used in the above examples can be omitted. Nevertheless, the proposed methodology can be applied to \textit{any} Prony signal, not just rational transfer functions of continuous-time stable LTI systems. That is, transforming the Prony signal into a finite dimensional subspace of $H_2(\mathbb{D})$, then using our GROP or GB  schemes to recover the nonlinear parameters followed by an application of Theorem~\ref{thm:lin} to find the linear parameters can be carried out for any general Prony problem.  Hence, the above example regarding the building model includes the additional discretization step.
\begin{figure}[tb]
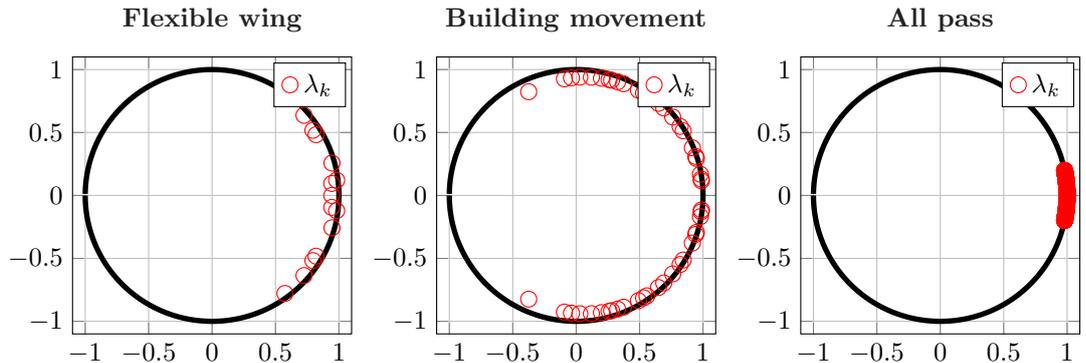

    \centering
    \input{TikZ/Flexiwing.tikz} 
    \input{TikZ/Building.tikz} 
    \input{TikZ/allpass.tikz}
    \caption{Nonlinear parameters (mirror image poles) describing the considered systems.}
    \label{fig:VanderPoles}
\end{figure}

\begin{table}[bt]
    \centering
    \caption{Spectral norm condition numbers of the system matrices for linear parameter recovery.}
    \label{tab:conds}
    \begin{tabular}{cccc}
    \toprule
        {Experiment} & Flexible wing & Building movement & All pass \\ \midrule
        {Vandermonde} & $1.6 \cdot 10^{11}$ & $2.05 \cdot 10^{17}$ & $2.08 \cdot 10^{20}$ \\ 
        {TM triangular (proposed)} & $13.53$ & $139.57$ & $135.58$ \\ \bottomrule
    \end{tabular}
\end{table}
\subsection{Pole identification of time-delayed continuous-time LTI systems}

In this section we consider an example application. Namely, we attempt to solve an identification problem of a continuous-time SISO LTI system with an unknown time delay $\tau$ in the input, i.e., in state-space form we have
\begin{align*}
  \dot{x}(t) &= Ax(t) + Bu(t-\tau), \\
         y(t) &= Cx(t) + Du(t-\tau),
\end{align*}
where the matrix $A$, the column vector $B$, the row vector $C$, and the scalar $D$ are of compatible dimensions.
It is well known, that the transfer function $H$ of a BIBO stable continuous-time system belongs to $H_2(\mathbb{C}_{+})$, where $\mathbb{C}_{+}$ denotes the open right complex half-plane. If no time delay is present, then the transfer function is rational and can be written as
\begin{equation}
    \label{eq:trfcont}
    H(s) := \sum_{k=1}^{M} \frac{c_k}{s - \lambda_k}, 
\end{equation}
where $c_k \in \mathbb{C}$ and with the  nonlinear parameters $\Lambda := \{\lambda_1, \lambda_2, \ldots, \lambda_M \} \subset \mathbb{C}_{-}$, where $\mathbb{C}_{-}$ denotes the open left complex half-plane. If the system contains a time delay $\tau > 0$, then the transfer function $H$ ceases to be rational and attains the form
\begin{equation}
    \label{eq:trfDelayed}
    H_{\tau}(s) := \e^{-\tau s} \sum_{k=1}^{M} \frac{c_k}{s - \lambda_k}. 
\end{equation}
Let $\cM$ denote the space of all signals adhering to~\eqref{eq:trfDelayed}. In our example we shall assume knowledge of $H_{\tau}(\ri\omega)$ for all $ \omega \in \mathbb{R}$ (or at least a sufficiently fine discrete sampling of it). Our primary objective is to use the proposed GROP and GB methods to recover the nonlinear parameters in $\Lambda$. To this end, we shall first construct an intuitive evaluation scheme adhering to the form presented in Lemma~\ref{thm:samp}. Once the evaluation scheme ${(\mathcal{F}_m)}_{m \ge 0}$ has been generated, we shall use it to construct a rational function in $H_2(\mathbb{D})$ whose mirror image poles can be used to recover the nonlinear parameters in $\lambda_k \ (k=1,\ldots,M)$. This is in accordance with the findings presented in Theorem~\ref{thm:ratpron}. We emphasize that the purpose of this example is to demonstrate the usefulness of GROP and GB schemes even when the Prony signal (in Eq.~\eqref{eq:trfDelayed}) is not a rational function. The development of  identification schemes based on the proposed methodology would require a much more careful analysis which falls outside of the scope of the current study. 

We consider now the following evaluation scheme ${(\mathcal{F}_m)}_{m \ge 0}$ with
\begin{equation}
    \label{eq:delayedEval}
    \mathcal{F}_m H := \frac{1}{2 \pi} \int_{-\infty}^{\infty} H(\ri\omega) \e^{\ri \omega m} \,\d\omega \quad (m \in \mathbb{N} \cup \{0\}).
\end{equation}
For any $H_{\tau} \in \cM$, the evaluation functionals from Eq.~\eqref{eq:delayedEval} are well-defined. Furthermore, we have
\begin{equation}
    \label{eq:conresp}
    \frac{1}{2 \pi} \int_{-\infty}^{\infty} H_{\tau}(\ri \omega) \e^{\ri \omega t} \,\d \omega = h_{\tau}(t),
\end{equation}
where the delayed impulse response $h_{\tau}$ is defined according to
\begin{equation*}
    h_{\tau}(t) = \sum_{k=1}^{M} c_k \e^{\lambda_k (t - \tau)} u(t - \tau),
\end{equation*}
where $u$ denotes the Heaviside function. This is a consequence of the time shift property associated with the Laplace transform (see, e.g.,~\cite[Eq.~(9.87)]{sigsys}). Using Eq.~\eqref{eq:conresp} we find that for each $m \in \mathbb{N} \cup \{0\}$,
\begin{align}
    \label{eq:imprespdel}
    \begin{split}
    \mathcal{F}_m H_{\tau} &= \sum_{k=1}^{M} c_k \e^{\lambda_k (m - \tau)} u(m - \tau) \\ &=  \sum_{k=1}^{M} c_k \e^{-\lambda_k \tau} \e^{\lambda_k m} u(m - \tau) = \sum_{k=1}^{M} \tilde{c}_k \e^{\lambda_k m} u(m - \tau), 
    \end{split}
\end{align}
where $\tilde{c}_k := c_k \e^{-\lambda_k \tau} \ (k=1,\ldots,M)$. In other words, the evaluation scheme ${(\mathcal{F}_m)}_{m \ge 0}$ samples the delayed impulse response $h_{\tau}$. Since we assume $\tau \in \mathbb{R}$, there exists an $m_0 \in \mathbb{N}$ for which $u(m - \tau) = 1 \ (m \geq m_0)$. Notice also that since $\Re( \lambda_k) < 0 \ (k=1,\ldots,M)$, we have $\e^{\lambda_k} \in \mathbb{D}\ (k=1,\ldots,M)$. Thus, we can consider the sequence
\begin{equation}
\label{eq:gseq}
    g := (0, \mathcal{F}_{m_0}H_{\tau}, \mathcal{F}_{2 m_0} H_{\tau}, \ldots ) \in \ell_2
\end{equation}
and define the notation
\begin{equation}
    \label{eq:discPolesDelaySys}
    \alpha_k := \e^{m_0 \lambda_k} \quad (k=1,\ldots,M).
\end{equation}
We shall henceforth assume that $- \pi < \Im (m_0 \lambda_k) \le \pi \ (k=1,\ldots,M)$. 
The $\mathcal{Z}$-transform of $g$ yields
\begin{equation}
    \label{eq:Gdel}
    G(z) := \sum_{m=0}^{\infty} \sum_{k=1}^{M} \tilde{c}_k \alpha_k^m z^m - \sum_{k=1}^M \tilde{c}_k = \sum_{k=1}^{M} \frac{\tilde{c}_k}{1 - {\alpha}_k z} - \sum_{k=1}^M \tilde{c}_k \quad (z \in \overline{\mathbb{D}}).
\end{equation}
Note that the transfer function $G$ does not only have the mirror image poles $\alpha_k \ (k=1,\ldots,M)$, but due to the constant term, there is an additional mirror image pole at $0$. Since $0$ cannot be written as $\e^{m_0\lambda}$ for some $\lambda \in \mathbb{C}_-$ as in Eq.~\eqref{eq:discPolesDelaySys}, this mirror image pole can be safely isolated from the $\alpha_k \ (k=1,\ldots,M)$. 
Due to the injectivity of the map $\lambda \mapsto \e^{m_0\lambda}$ on the stripe $\{z \in \mathbb{C}: \ -\pi < \Im(m_0\lambda) \le \pi \}$, the values $\alpha_k \ (k=1,\ldots,M)$ can be used to recover the nonlinear parameters  $\lambda_k \ (k=1,\ldots,M)$. 


Given $g$ from Eq.~\eqref{eq:gseq}, we can apply the GOP method (see subsection~\ref{sec:prelims}) to recover the nonlinear parameters $\lambda_k \ (k=1,\ldots,M)$. Indeed, considering  Eq.~\eqref{eq:imprespdel}, we obtain
\begin{equation*}
    \left[ \funE\big(( S^{\#})^{m+j} g\big) \right]_{m, j=0}^{M-1} \left[ p_k \right]_{k=0}^{M-1} = -\left[ \funE\big(( S^{\#})^{M + k} g\big)\right]_{k=0}^{M-1},
\end{equation*}
where $\funE : \ell \to \mathbb{C}$, $(x_0,x_1,x_2,\ldots) \mapsto x_0$ is the point evaluation functional, $S^{\#}$ denotes the backward shift operator for sequences, and $p_k \ (k=0,\ldots,M-1)$ denote the algebraic coefficients of the Prony polynomial $P_{\Lambda(S^{\#})}$ (see Eq.~\eqref{eq:pronpol}). The zeros of this polynomial coincide with the nonlinear parameters to be recovered. 

According to Theorem~\ref{thm:ratpron}, we can also recover the nonlinear parameters using the GROP method reviewed in subsection~\ref{sec:pronyrat}. In this case, the result of the transformation proposed in Theorem~\ref{thm:ratpron}, when applied to the sequence $g$ coincides with $G$ from Eq.~\eqref{eq:Gdel}. Then, choosing the evaluation functional $\funF$ according to Eq.~\eqref{eq:defF} and considering the definition of the $H_2(\mathbb{D})$ shift operator $S^{\ast}$ from Eq.~\eqref{eq:adjshift}, leads to the linear equation system
\begin{equation*}
    \left[ \funF\big(( S^{\ast})^{m+j} G\big) \right]_{m, j=0}^{M-1} \left[ p_k \right]_{k=0}^{M-1} = -\left[ \funF\big(( S^{\ast})^{M + k} G\big)\right]_{k=0}^{M-1}.
\end{equation*}
Its solution  matches the coefficients of the Prony polynomial $P_{\Lambda(S^{\ast})}$ from Eq.~\eqref{eq:ratPronPol}. The zeros of this polynomial are the parameters $\alpha_k \ (k=1,\ldots,M)$ defined in Eq.~\eqref{eq:discPolesDelaySys}, from which the nonlinear parameters $\lambda_k \ (k=1,\ldots,M)$ can be recovered.

Finally, we note that, as a consequence of Theorem~\ref{thm:pronbern}, the GB scheme can also be applied to recover the nonlinear parameters of the delayed transfer function $H_{\tau}$. In this case, since $G$ from Eq.~\eqref{eq:Gdel} can be written as a sum of elementary rationals, we can apply Theorem~\ref{thm:gbern}. This method of recovering the nonlinear parameters is especially useful when $M$ is not given a priori. Indeed, the GB method recovers $\gamma$-dominant (see Eq.~\eqref{eq:gdpole}) nonlinear parameters one-by-one and does not rely on any information about $M$. Note that $G = \mathcal{Z}_{w}[g]$, where the transformation is defined according to Eq.~\eqref{eq:zw} with $w=1$. In practice, we can only approximate the $\mathcal{Z}_w$-transformation by truncating the infinite sum, however as our experimental results show, the nonlinear parameters can still be recovered with a high precision. 

In order to demonstrate the usefulness of the proposed scheme, we simulate a small delay system, where
$$
    (\lambda_1,\lambda_2,\lambda_3):= ( -0.157 + 0.359\ri, -0.157 - 0.359\ri, -2.3 )
$$
and 
$$
    (c_1,c_2,c_3) := (0.026 + 0.195\ri, 0.026 - 0.195\ri, 0.022 ).
$$
The time delay for this example is chosen as $\tau = 1.5$.
We emphasize that $\tau$ is not known to the methods presented and cannot be readily identified. The classical Prony method is applied to Eq.~\eqref{eq:imprespdel} to recover the nonlinear parameters in $\Lambda$. In addition, the GROP and GB schemes are applied to~\eqref{eq:Gdel} for similar purposes. Fig.~\ref{fig:poles} shows the nonlinear parameters of  the continuous-time transfer function $H_\tau$ and its discrete-time counterpart $G$ recovered by the methods. Interestingly, despite the additional transformations involved, for this example, the rational methods found slightly better approximations of the nonlinear parameters. 

\begin{figure}[bt]
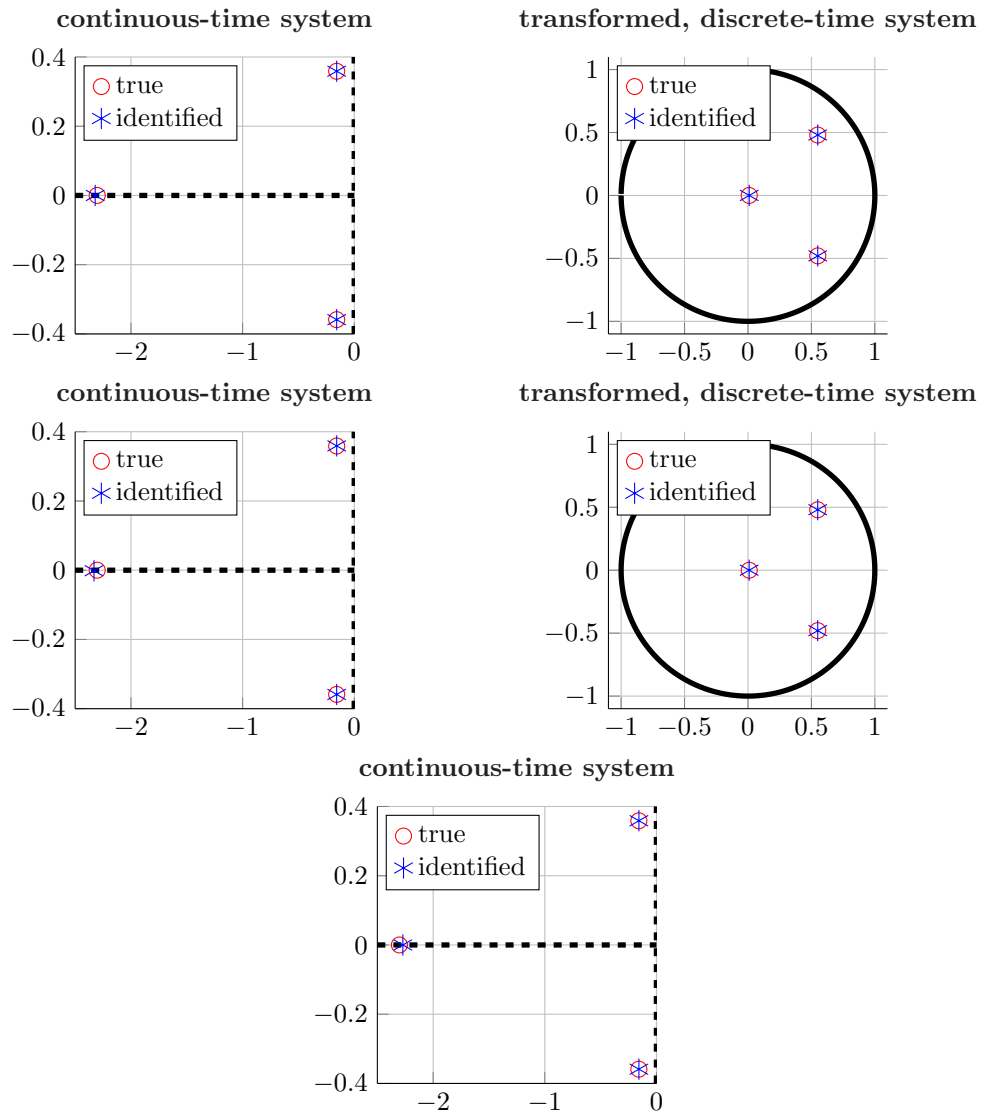

    \centering
    \input{TikZ/DelayPronyRat.tikz}
    \input{TikZ/DelayBern.tikz}
%
%
\definecolor{mycolor1}{rgb}{1.00000,0.00000,1.00000}%
\definecolor{mycolor2}{rgb}{0.12941,0.12941,0.12941}%
\begin{tikzpicture}

\begin{axis}[%
width=0.24\textwidth,
height=0.24\textwidth,
at={(6.352in,0.983in)},
scale only axis,
xmin=-2.5,
xmax=0,
ymin=-0.4,
ymax=0.4,
axis background/.style={fill=white},
title style={font=\bfseries\color{mycolor2}},
title={continuous-time system},
axis x line*=bottom,
axis y line*=left,
xmajorgrids,
ymajorgrids,
legend style={at={(0.03,0.97)}, anchor=north west, legend cell align=left, align=left}
]
\addplot [color=black, dashed, line width=2.0pt, forget plot]
  table[row sep=crcr]{%
0	-0.4\\
0	0.4\\
};
\addplot [color=black, dashed, line width=2.0pt, forget plot]
  table[row sep=crcr]{%
-2.5	0\\
0	0\\
};
\addplot [color=red, only marks, mark size=3pt, mark=o, mark options={solid, fill=red, red}]
  table[row sep=crcr]{%
-0.15735537241985	0.358770670270572\\
-0.15735537241985	-0.358770670270572\\
-2.30258509299405	0\\
};
\addlegendentry{true}

\addplot [color=blue, only marks, mark size=4pt, mark=asterisk, mark options={solid, fill=mycolor1, blue}]
  table[row sep=crcr]{%
-0.156915662864696	-0.358824370009661\\
-0.156915662864696	0.358824370009661\\
-2.27155066721228	2.2861904978423e-14\\
};
\addlegendentry{identified}

\end{axis}
\end{tikzpicture}%
    \caption{Nonlinear parameters of the transfer function of a time-delayed SISO LTI system recovered by the GROP method (top row), the GB method  (middle row) and the classical Prony algorithm (bottom row).}
    \label{fig:poles}
\end{figure}

\subsection{Recovering the parameters of reproducing kernels}

In our final example, we consider the following application of the rational methods presented here. Suppose $\mathcal{X} \subset \mathbb{C}$ is compact and let $\mathfrak{H}$ be a reproducing kernel Hilbert space (RKHS) of functions $\mathcal{X} \to \mathbb{C}$. Since $\mathfrak{H}$ is an RKHS, by Aronszajn's theorem~\cite{aronszajn1950}, there exists a unique mapping
$$
    K : \mathfrak{H} \times \mathfrak{H} \to \mathbb{C}
$$
such that for any $f \in \mathfrak{H}$ and $K_y := K(\cdot,y) \in \mathfrak{H}$,
$$
    {\langle K_y, f \rangle}_{\mathfrak{H}} = {\langle K( \cdot, y), f \rangle}_{\mathfrak{H}} = f(y) \quad (y \in \mathcal{X}).
$$
Furthermore, $K(x, y) = {\langle K_x, K_y \rangle}_{\mathfrak{H}}$ for any $x,y \in \mathcal{X}$. The RKHS property is equivalent to all point evaluation functionals on $\mathfrak{H}$ being bounded. Suppose that $\mathfrak{H}$ contains all polynomials $\mathcal{X} \to \C$. Examples of such $\mathfrak{H}$ include RKHS given by RBF kernels~\cite{jayasumana2015kernel} and those defined by Sobolev-type kernels on compact sets~\cite{narcowich2002approximation}. We are interested in problems, where the Prony signal attains the form
\begin{equation}
    \label{eq:rkhsPronySig}
    \begin{split}
        & f(x) = \sum_{k=1}^{M} c_k K_{\lambda_k}(x) = \sum_{k=1}^{M} c_k K(x, \lambda_k) \quad (x \in \mathcal{X}, \ \Lambda := \{ \lambda_1, \lambda_2, \ldots, \lambda_M \} \subset \mathcal{X},\  c_k \in \mathbb{C}). 
    \end{split}
\end{equation}
Exploiting the fact that $\mathcal{X}$ is compact (and thus bounded), there exists a $C\in \mathbb{R}$ be such that $|x| < C$ for all $x \in \mathcal{X}$. If we assume that $\mathfrak{H}$ has a polynomial basis, we can  consider evaluation schemes ${(\mathcal{F}_m)}_{m \ge 0}$ of the form
\begin{equation}
    \label{eq:rkhseval}
    \mathcal{F}_m f := {\langle f, p_m \rangle}_{\mathfrak{H}} \quad (f,p_m \in \mathfrak{H},\ p_m(\cdot) = (\cdot /C)^m,\ m \in \mathbb{N} \cup \{0\}). 
\end{equation}
Using Theorems~\ref{thm:ratpron} and~\ref{thm:gbern}, it is possible to apply the GB scheme to recover the linear and nonlinear parameters of $f$ in Eq.~\eqref{eq:rkhsPronySig}. A good example of a concrete RKHS where this scheme might be applicable is the following. Consider $\mathcal{X} := [-1, 1] \subset \mathbb{R}$ and the RKHS $\mathfrak{H}$ defined by the kernel
\begin{equation}
    \label{eq:kergood}
    K(x,y) := \sum_{k=0}^{\infty} \frac{2k +1}{1 + k^2} P_k(x) P_k(y),
\end{equation}
where $P_k$ denotes the $k$-th Legendre polynomial~\cite{szego}. This RKHS includes all polynomials, and thus the above proposed evaluation scheme is applicable for any $m \in \mathbb{N} \cup \{0\}$ and $C > 1$. 

In the following example, instead of the infinite sum given in Eq.~\eqref{eq:kergood}, we consider the $N$-dimensional RKHS defined by
\begin{equation}
    \label{eq:finiteKer}
    K(x, y) := \sum_{n=0}^{N} \pi_k(x) \pi_k(y) \quad (x, y \in [-1, 1]).
\end{equation}
Here $\pi_k := \sqrt{\frac{2k + 1}{2}} P_k$. Then, $\{\pi_k: \ k \in \mathbb{N} \cup \{0\}\}$ forms an orthonormal system in $L_2([-1,1])$. We note that for any $N \in \mathbb{N}$ and any $N$-element system of orthogonal polynomials (not necessarily Legendre), the Christoffel-Darboux formula (see, e.g.,~\cite{szego, gautschi, vandenhof}) provides a closed form for $K$. The RKHS defined by the kernel in Eq.~\eqref{eq:finiteKer} is the space of all  polynomials defined over $[-1, 1]$ of degree at most $N$. 

In our experiment, we choose $N=512$. In effect this means that we can only apply the evaluation scheme from Eq.~\eqref{eq:rkhseval} up to $m=512$. In this way, we can only approximate the SISO LTI impulse responses which are required to transform the problem to $H_2(\mathbb{D})$ (see Theorem~\ref{thm:ratpron}). Nevertheless, due to the geometric decay of $\mathcal{F}_m f$ for $m \to \infty$, we can still achieve meaningful results with the proposed rational methods. In particular, we consider a Prony signal $f$ defined according to Eq.~\eqref{eq:rkhsPronySig}, where the kernel is given by Eq.~\eqref{eq:finiteKer}. In this experiment, we want to showcase how the GB algorithm can be used to recover only a few, dominant parameters from $\Lambda$. In order to achieve this, we chose $M=30$ and $\Lambda$ from Eq.~\eqref{eq:rkhsPronySig} as an equidistant sampling of $[-0.9, -0.7]$, i.e., $\lambda_k := -0.9 + 0.2 \cdot \frac{k-1}{M-1} \ (k=1,\ldots,M)$. The evaluation scheme from Eq.~\eqref{eq:rkhseval} is applied to $f$ to produce the sequence $g={(g_m)}_{m \ge 0}$, which satisfies
\begin{equation*}
    g_m := \sum_{k=1}^{M}c_k (\lambda_k/C)^m \quad (m \in \mathbb{N} \cup \{0\}).
\end{equation*}
Indeed, 
\begin{equation*}
    \begin{split}
        & \mathcal{F}_m f = \mathcal{F}_m \left( \sum_{k=1}^M c_k K(\cdot, \lambda_k) \right) = \sum_{k=1}^M c_k \left\langle K(\cdot, \lambda_k), \left((\cdot)/C\right)^m \right\rangle_{\mathfrak{H}} = \sum_{k=1}^M c_k (\lambda_k/C)^m,
    \end{split}
\end{equation*}
where in the last equality we exploited the reproducing property of the kernel $K$. Similarly to the previous experiment, the GOP  method is applied to $G:= \mathcal{Z}[g]$ to recover the parameters $\lambda_k$ and $c_k$. As seen in  Fig.~\ref{fig:RKHS}, the proximity of the nonlinear parameters to be recovered poses a challenge for the  GOP method. Indeed, in our experiments the sampling matrix becomes ill-conditioned which results in poor approximations of $\lambda_k$ and $c_k$. It should also be noted, that reducing the number of assumed Prony atoms ($M$) when constructing the sampling matrix~\eqref{eq:sampmat} provides no guarantee that  GOP method recovers the actual signal parameters $\lambda_k$. Since the Prony signal $G$ is rational, the same behavior is expected for the GROP method which in this sense is equivalent to the GOP method.

On the other hand, applying the GB scheme to $G$, we can recover a reduced number of nonlinear parameters $\lambda_k$. This is well-reflected by our results in Fig.~\ref{fig:RKHS2}. In this case, the GB algorithm is able to find $M_r = 2$ nonlinear parameters of the signal nearly perfectly while the GOP method (also for $M_r = 2$) suffers from significant errors. We note that in all our experiments, we rely on the algorithms proposed in~\cite{siamdozsa} to select the free parameters of the GB method.


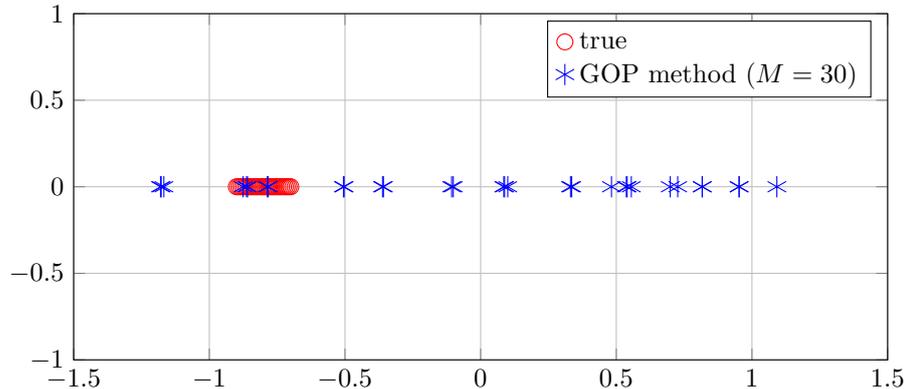
\begin{figure}[bt]
    \centering
%
%
\definecolor{mycolor1}{rgb}{0.12941,0.12941,0.12941}%
\begin{tikzpicture}

\begin{axis}[%
width=0.7\textwidth,
height=0.3\textwidth,
at={(1.454in,0.741in)},
scale only axis,
xmin=-1.5,
xmax=1.5,
ymin=-1,
ymax=1,
axis background/.style={fill=white},
title style={font=\bfseries\color{mycolor1}},
xmajorgrids,
ymajorgrids,
legend style={legend cell align=left, align=left}
]
\addplot [color=red, only marks, mark size=3pt, mark=o, mark options={solid, fill=red, red}]
  table[row sep=crcr]{%
-0.9	0\\
-0.893103448275862	0\\
-0.886206896551724	0\\
-0.879310344827586	0\\
-0.872413793103448	0\\
-0.86551724137931	0\\
-0.858620689655172	0\\
-0.851724137931035	0\\
-0.844827586206897	0\\
-0.837931034482759	0\\
-0.831034482758621	0\\
-0.824137931034483	0\\
-0.817241379310345	0\\
-0.810344827586207	0\\
-0.803448275862069	0\\
-0.796551724137931	0\\
-0.789655172413793	0\\
-0.782758620689655	0\\
-0.775862068965517	0\\
-0.768965517241379	0\\
-0.762068965517241	0\\
-0.755172413793103	0\\
-0.748275862068965	0\\
-0.741379310344828	0\\
-0.73448275862069	0\\
-0.727586206896552	0\\
-0.720689655172414	0\\
-0.713793103448276	0\\
-0.706896551724138	0\\
-0.7	0\\
};
\addlegendentry{true}

\addplot [color=blue, only marks, mark size=4pt, mark=asterisk, mark options={solid, fill=blue, blue}]
  table[row sep=crcr]{%
-1.16746424751663	-0\\
-1.17983630091943	-0\\
-1.17722628235984	-0\\
-0.859003275023961	-0\\
-0.782582675035618	-0\\
-0.875738758304668	-0\\
-0.862880206455697	-0\\
-0.786625951523224	-0\\
-0.505706037868703	-0\\
-0.358188991710682	-0\\
-0.106906539707381	-0\\
0.0883310684205315	0\\
0.331142572560632	0\\
0.539936768978086	0\\
-0.503143532166259	-0\\
-0.361636147915902	-0\\
-0.10080831061842	-0\\
0.0852238335954886	0\\
0.335693269673786	0\\
0.536893646566699	0\\
0.817005078841336	0\\
0.816305806089429	0\\
1.09186533578887	0\\
0.952989902657889	0\\
0.952285263810996	0\\
0.726327654590445	0\\
0.482227004947461	0\\
0.698832582647303	0\\
0.555390781839929	0\\
0.1	0\\
};
\addlegendentry{GOP method ($M=30$)}

\end{axis}

\end{tikzpicture}%
    \caption{Nonlinear parameters $\lambda_k$ recovered by the GOP  method. The proximity of the parameters to be recovered can cause the Prony sampling matrix to become ill-conditioned which results in poor nonlinear parameter recovery.}
    \label{fig:RKHS}
\end{figure}

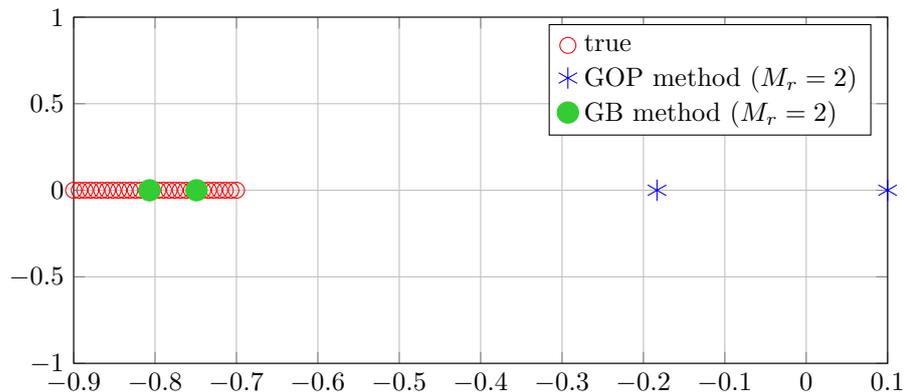
\begin{figure}[bt]
    \centering
%
%
\definecolor{mycolor1}{rgb}{0.12941,0.12941,0.12941}%
\definecolor{mygreen}{RGB}{50,205,50}
\begin{tikzpicture}

\begin{axis}[%
width=0.7\textwidth,
height=0.3\textwidth,
at={(1.454in,0.301in)},
scale only axis,
xmin=-0.9,
xmax=0.1,
ymin=-1,
ymax=1,
axis background/.style={fill=white},
title style={font=\bfseries\color{mycolor1}},
xmajorgrids,
ymajorgrids,
legend style={legend cell align=left, align=left}
]
\addplot [color=red, only marks, mark size=3pt, mark=o, mark options={solid, fill=red, red}]
  table[row sep=crcr]{%
-0.9	0\\
-0.893103448275862	0\\
-0.886206896551724	0\\
-0.879310344827586	0\\
-0.872413793103448	0\\
-0.86551724137931	0\\
-0.858620689655172	0\\
-0.851724137931035	0\\
-0.844827586206897	0\\
-0.837931034482759	0\\
-0.831034482758621	0\\
-0.824137931034483	0\\
-0.817241379310345	0\\
-0.810344827586207	0\\
-0.803448275862069	0\\
-0.796551724137931	0\\
-0.789655172413793	0\\
-0.782758620689655	0\\
-0.775862068965517	0\\
-0.768965517241379	0\\
-0.762068965517241	0\\
-0.755172413793103	0\\
-0.748275862068965	0\\
-0.741379310344828	0\\
-0.73448275862069	0\\
-0.727586206896552	0\\
-0.720689655172414	0\\
-0.713793103448276	0\\
-0.706896551724138	0\\
-0.7	0\\
};
\addlegendentry{true}

\addplot [color=blue, only marks, mark size=4pt, mark=asterisk, mark options={solid, fill=blue, blue}]
  table[row sep=crcr]{%
-0.183333333333333	-0\\
0.1	0\\
};
\addlegendentry{GOP method ($M_r=2$)}

\addplot [color=mygreen, only marks, mark size=4pt, mark=*, mark options={solid, fill=mygreen, mygreen}]
  table[row sep=crcr]{%
-0.80675234955849	-0\\
-0.749022666341454	0\\
};
\addlegendentry{GB method ($M_r=2$)}

\end{axis}
\end{tikzpicture}%
    \caption{The GB  method can find the actual nonlinear parameters in an iterative fashion. When attempting to reduce the size of the Prony sampling matrix (i.e., by reducing $M$), there are no guarantees that the GOP method will find the actual nonlinear parameters.}
    \label{fig:RKHS2}
\end{figure}

\section{Conclusions}
\label{sec:conc}

As a main contribution of this study we showed that parameter recovery problems which can be solved using the generalized operator-based Prony method always corresponds to solving SISO LTI identification tasks. Indeed, given access to a Prony signal we can always find a transformation, which maps the Prony signal to an appropriate finite-dimensional subspace of $H_2(\mathbb{D})$. This subsequently allowed rational identification algorithms to be applied to general Prony problems. In particular, we considered the application of generalized Bernoulli schemes. We gave examples where the parameters of irrational Prony signals could be successfully recovered using rational Prony and Bernoulli schemes. In addition we showed that treating such problems in a rational setting has practical benefits as certain numerical problems (e.g., solving large Vandermonde systems) can be overcome.

In our future work, we would like to demonstrate the benefit of using the considered approach (especially the generalized Bernoulli scheme) on real-world identification problems. To this end, we shall consider the problem of more efficiently optimizing the parameters (i.e., the TM system parameters) of the Bernoulli algorithm. Furthermore, we shall investigate the effect of measurement noise and propose theoretically sound ways to mitigate it.

\section*{Code and data availability}
The MATLAB implementation of the proposed methods and experiments can be downloaded from 
\begin{center}
\url{https://gitlab.com/tamasdzs/genRatPronBern}.
\end{center}

\section*{CRediT author statement}
\begin{itemize}
    \item \textbf{Tam\'as D\'ozsa}: Conceptualization, Methodology, Software, Writing - Original Draft
    \item \textbf{Matthias Voigt}:  Writing - Review \& Editing, Supervision 
    \item \textbf{Zolt\'an Szab\'o}: Investigation, Resources
    \item \textbf{J\'ozsef Bokor}: Conceptualization, Project administration
    \item \textbf{P\'eter Kov\'acs}: Funding acquisition
\end{itemize}

\section*{Acknowledgment}

This project has received funding from the Swiss Government Excellence Scholarship No. 2025.0057. The research was supported by the Hungarian National Research, Development and Innovation Office in the framework of project MEC\_R\_149388. The research was supported by the European Union within the framework of the National Laboratory for Autonomous Systems  Research, Development and Innovation Fund (RRF-2.3.1-21-2022-00002), financed under the the K\_23 ``OTKA" funding schemes and the University Excellence Fund of Eötvös Loránd University, Budapest, Hungary (ELTE).

\bibliographystyle{plain}
\bibliography{refs_formatted}

\end{document}